\documentclass[12pt, a4paper, reqno, twoside, makeidx]{amsart}
\usepackage[margin=2.7cm, marginpar=3cm]{geometry}

\usepackage{verbatim}
\usepackage{amsmath}
\usepackage{amsfonts}
\usepackage{amssymb}
\usepackage{graphicx}
\usepackage{color}
\usepackage{empheq}
\usepackage[all]{xy}
\usepackage{tikz-cd}
\usepackage{tikz}
\usetikzlibrary{calc}
\usepackage{amssymb}
\usepackage{amsmath, amsthm, amssymb, amsfonts, amscd}
\usepackage[english]{babel}
\usepackage{url}
\usepackage{hyperref}
\usepackage{mathtools}
\usepackage{bm}
\usepackage{dirtytalk}
\usepackage[percent]{overpic}
\usepackage{CJKutf8}
\usepackage{blkarray}
\usepackage{cancel}
\usepackage{thmtools}
\usepackage{thm-restate}

\usepackage{caption}
\captionsetup[figure]{font=small}

\theoremstyle{remark}
\newtheorem{remk}{Remark}[section]
\theoremstyle{plain}
\newtheorem{thm}{Theorem}[section]
\newtheorem{lem}[thm]{Lemma}
\newtheorem{prop}[thm]{Proposition}
\newtheorem{defi}[thm]{Definition}
\newtheorem{cor}[thm]{Corollary}

\theoremstyle{definition}
\newtheorem{exa}[thm]{Example}

\newcommand{\Z}{\mathbb{Z}}

\newcommand{\R}{\mathbb{R}}
\newcommand{\C}{\mathbb{C}}

\setcounter{tocdepth}{1} 

\author{Antonio Alfieri and Keegan Boyle}

\title[Signature of strongly invertible knots ]{Strongly invertible knots, invariant surfaces,\\ and the Atiyah-Singer signature theorem}
\begin{document}

\begin{abstract}
We use the $G$-signature theorem to define an invariant of strongly invertible knots analogous to the knot signature.
\end{abstract}

\maketitle

\thispagestyle{empty}

\section{Introduction}
A \emph{symmetric knot} $(K,\rho)$ is a smooth knot $K \subset S^3$ along with a finite order diffeomorphism $\rho:S^3 \to S^3$ which leaves $K$ invariant. If $\rho$ preserves the orientation on $S^3$ and reverses the orientation on $K$ then we say that $(K,\rho)$ is \emph{strongly invertible}. Note that by Smith theory, the fixed set of a strong inversion is a circle $A\subset S^3$, which  must be unknotted \cite{MorganBass}, and intersect $K$ in two points. (If instead $A$ is disjoint from $K$ then $(K,\rho)$ is called \emph{periodic}.)

The three-dimensional topology of symmetric knots has been studied extensively using both classical techniques  \cite{MR292060,MR1102258}, and the modern methods of Khovanov and Floer homology \cite{MR4286365,MR4125758}. Their four-dimensional topology on the other hand is much less explored. Some work regarding the equivariant four-genus of periodic and strongly invertible knots has been done recently by Issa and the second author \cite{BoyleIssa}. Other work regarding the four-dimensional topology of symmetric knots has been done by Davis and Naik \cite{MR2216254}, and by Dai, Hedden and Mallick \cite{DaiHeddenMallick}, building on the methods of \cite{MR4126897} based on ideas from involutive Heegaard Floer homology \cite{MR3649355,MR4205781}.

\begin{figure}[h!]
\vspace{0.35cm}
\scalebox{.3}{\includegraphics{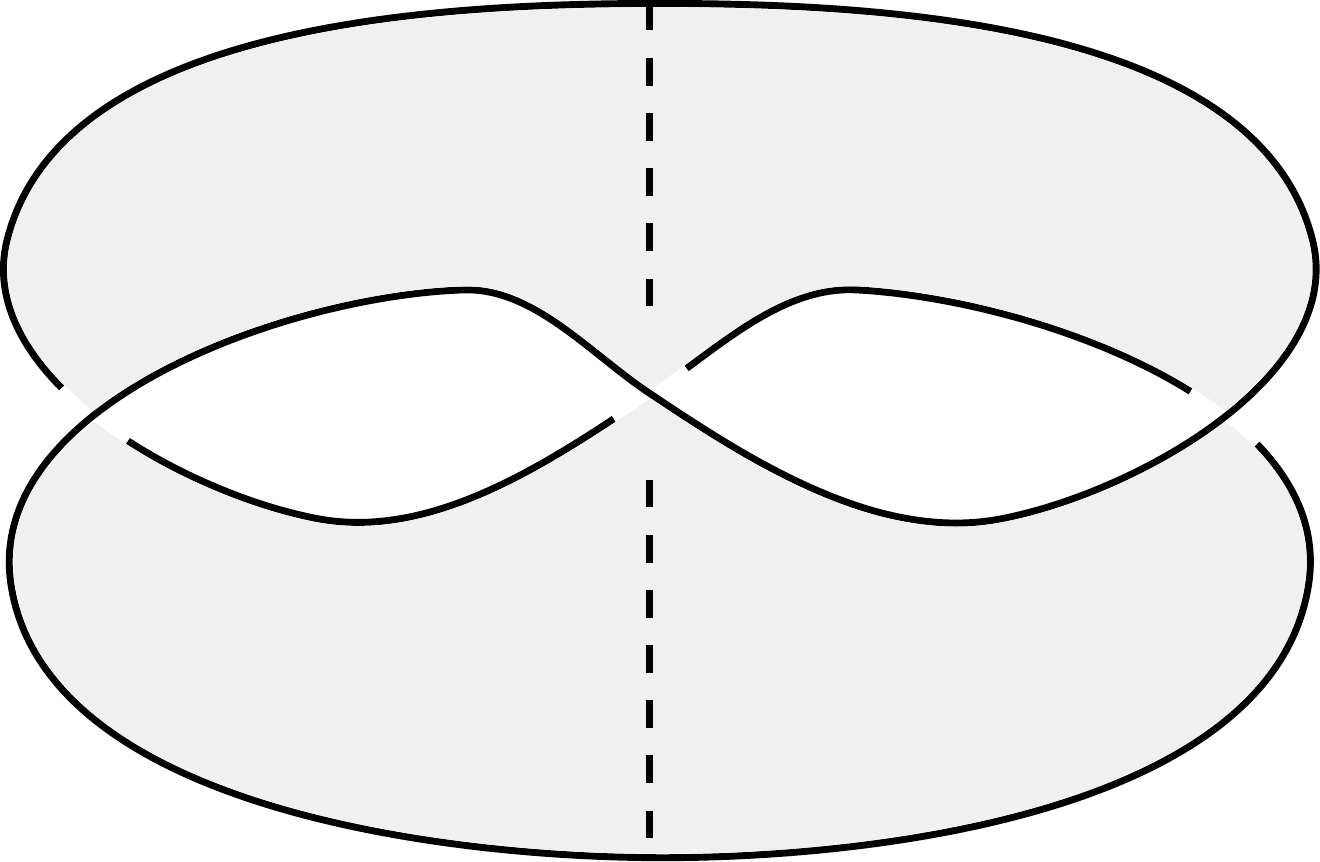}}
\caption{
}
\label{fig:unknot}
\end{figure}

In \cite{MR143201} Trotter defines a bilinear pairing $\theta : H_1(F)\otimes H_1(F) \to \Z$ associated to  a Seifert surface $F\subset S^3$. If $K$ is a strongly invertible knot (or a periodic knot) then there is an invariant Seifert surface bounding $K$, and one can consider the involution $\rho_*: H_1(F) \to H_1(F)$ induced by the knot symmetry. In this case $H_1(F)$ splits into the direct sum $H_1(F)=E_+\oplus E_-$ of the $+1$ and $-1$ eigenspaces  of $\rho_*$. Taking the difference of the signatures 
\[
\widetilde{\sigma} = \sigma((\theta+\theta^T)|_{E_+}) - \sigma((\theta+\theta^T)|_{E_-})
\] 
we get a notion of equivariant signature. In \cite{BoyleIssa} the authors prove that in the case of periodic knots $\widetilde{\sigma}$ does not depend on the choice of the invariant Seifert surface,  and that it defines an equivariant concordance invariant \cite[Proposition 16]{BoyleIssa}. 

However, if $K$ is a strongly invertible knot then $\widetilde{\sigma}$ \emph{does} depend on the choice of the invariant Seifert surface. For example, from the invariant genus one Seifert surface $F$ displayed in Figure \ref{fig:unknot}, one computes $\widetilde{\sigma} = -2$, despite the fact that $\partial F$ is the unknot. 

\vspace{0.3cm}
A similar technical difficulty arises when studying the signature associated to the Seifert pairing of non-orientable surfaces (the Goeritz pairing). In \cite{MR500905} Gordon and Litherland found a correction that when added to the signature of the Goeritz pairing leads to a knot invariant, namely the knot signature. Their work is based on the Atiyah-Singer $G$-signature formula \cite{MR236952}, a byproduct of their celebrated index theorem.  In this paper we use the same strategy to find a correction term $e(\Delta,\widetilde{\gamma})$ that, when added to the equivariant signature, leads to a numerical invariant $\widetilde{\sigma}(K)$ independent of the choice of invariant surface. This correction term can be computed starting from any admissible diagram (see Definition \ref{def:admissible}). To do so, we cut the knot at the two fixed points and obtain a pair of arcs $a$ and $b$ which we orient so that they induce opposite orientations on $K$. 

\begin{restatable}{thm}{diagrammaticeuler} \label{thm:diagrammatic_euler_formula}
Let $D$ be an admissible diagram for a (directed\footnote{See Definition \ref{defdirection}.}) strongly invertible knot $K$. Then with respect to the admissible checkerboard surface we have that
\[
e(\Delta, \widetilde{\gamma}) = -\sum_{c \in a\cap b, c \notin h} \epsilon(c),
\]
where the sum is over the crossings of between $a$ and $b$ which do not lie on the axis of symmetry $h$. Here $\epsilon(c)=\pm1$ denotes the sign of the crossing $c$.
\end{restatable}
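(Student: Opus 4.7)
The Euler number $e(\Delta,\widetilde{\gamma})$ is the obstruction to extending the boundary framing $\widetilde{\gamma}$ of $\partial\Delta$ to a nowhere-vanishing section of the normal bundle of $\Delta$; equivalently, it equals the signed self-intersection of $\Delta$ with a push-off $\Delta'$ along $\widetilde{\gamma}$. The plan is to use the admissible diagram $D$ to produce a concrete model for the pair $(\Delta,\widetilde{\gamma})$ in which the intersections $\Delta\cap \Delta'$ can be read off locally at each crossing of $D$.

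The geometric setup would go as follows. Push the unknotted axis $A$ equivariantly into $B^4$ to obtain a disk $\Delta$, using a smooth isotopy compatible with an extension of the involution $\rho$ over $B^4$. Along $\partial\Delta=A$, the framing $\widetilde{\gamma}$ is pinned down by the invariant checkerboard surface (more precisely, by the directions tangent to the surface and transverse to $A$, together with the prescribed behaviour at the two fixed points of $K\cap A$). I would then choose a generic extension of this framing to a section of the normal bundle of $\Delta$ and identify the signed count of its zeros with the right-hand side of the claimed formula.

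The core of the proof is a local model near each crossing of $D$. At an off-axis crossing $c\in a\cap b$, a small neighbourhood meets $\Delta$ and the push-off determined by the checkerboard framing in a standard pair of transverse sheets, producing a single signed intersection of sign $-\epsilon(c)$. For a crossing of $a$ with itself or of $b$ with itself, the crossing $c$ is paired with its image $\rho(c)$, and the two local contributions cancel in the signed count, using that $\rho$ reverses the induced orientations along $a$ and $b$. For a crossing in $a\cap b$ that lies on the symmetry axis $h$, the local picture is invariant under $\rho$ and the framing extends across the corresponding arc in $\Delta$ without obstruction, so no intersection arises. Summing the surviving local contributions then gives the formula.

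The main obstacle will be verifying that the local sign at an off-axis $a\cap b$ crossing is exactly $-\epsilon(c)$, since it depends on a web of conventions: the orientation of $\Delta$ inherited from the direction on $K$, the chosen checkerboard colour, and which of $a$ and $b$ lies above the other at $c$. I would fix these conventions at the outset, perform the computation in one explicit reference configuration (say a positive crossing with $a$ over $b$ and a fixed choice of shaded region), and recover the remaining cases by applying $\rho$, by a crossing change, or by swapping the two checkerboard colours. A secondary check is that the symmetric pairing of $a$-$a$ and $b$-$b$ crossings genuinely cancels rather than doubles, which I would confirm directly inside the same local model.
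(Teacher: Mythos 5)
There is a genuine gap, and it sits exactly where your plan is vaguest: the framing. The canonical longitude $\widetilde{\gamma}$ is \emph{not} determined by the invariant checkerboard surface along $\partial\Delta$ --- by admissibility the surface is disjoint from $h'$, and $\partial\Delta=\widetilde{h}'$ is the lift of $h'$, so the surface pins down nothing there. The longitude is defined by a global condition: $\gamma$ is the band arc along $h'$ chosen so that the band surgery produces a two-component link $(a\cup\gamma)\cup(b\cup\rho(\gamma))$ with linking number zero. Any correct proof must use this normalization, since changing $\widetilde{\gamma}$ by a full twist changes $e(\Delta,\widetilde{\gamma})$ by $2$ while the right-hand side of the formula is unchanged; your outline never invokes it, so the claimed per-crossing contributions cannot be established as stated. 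Relatedly, $\Delta$ is not a pushed-in copy of the axis $A$ inside $B^4$: it is the fixed disk of the distinguished lift in $\Sigma^2(D^4,F')$, with boundary the circle $\widetilde{h}'$, and the bridge from the Euler number upstairs to a diagrammatic count downstairs is precisely Lemma \ref{lem:relative_euler_linking_formula}, $e(\Delta,\widetilde{\gamma})=2\,\mathrm{lk}(A,\gamma\cup\alpha)$ (the factor $2$ coming from the two sheets of the cover). Your proposal has no substitute for this step.

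The proposed localization is also geometrically unfounded. The intersections of $\Delta$ with its push-off occur over the axis region: in the paper's computation they are counted by crossings of $\gamma\cup\alpha$ with $A$, i.e.\ by on-axis crossings of the diagram (via $\alpha$, the push-off of $h$ in the surface) and by twists of the band (via $\gamma$). Off-axis crossings of $a$ with $b$ contribute nothing locally; they enter only through the linking-number-zero condition, which forces the signed number of band twists to equal minus the signed count of $a$--$b$ crossings, after which the on-axis terms cancel against the $\eta$-contributions of $\alpha$. So the "$-\epsilon(c)$ per off-axis crossing'' is global bookkeeping rather than a local intersection model, and the local verification you plan to carry out (as well as the $\rho$-pair cancellation of $a$--$a$ and $b$--$b$ crossings, which simply never appear in the correct count) has no geometric counterpart. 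Your final tally happens to agree with the theorem, but the mechanism proposed to derive it would not go through; the missing ingredients are Lemma \ref{lem:relative_euler_linking_formula} and the defining property of the canonical longitude.
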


Note that admissible diagrams of strongly invertible knots always exist. We also show that for an alternating admissible diagram, the correction term $e(\Delta, \widetilde{\gamma})$ is equal to minus the $g$-signature $\widetilde{\sigma}(K)$. 

\begin{restatable}{thm}{alternating} \label{thm:alternating}
Let $D$ be an alternating admissible diagram for a (directed) strongly invertible knot $K$. Then 
\[
\widetilde{\sigma}(K) = \sum_{c \in a\cap b, c \notin h} \epsilon(c),
\]
as above. 
\end{restatable}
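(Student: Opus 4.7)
The plan is to combine Theorem~\ref{thm:diagrammatic_euler_formula} with the basic decomposition $\widetilde\sigma(K) = \widetilde\sigma(F) - e(\Delta,\widetilde\gamma)$ that defines the correction term. Applied to the admissible checkerboard surface $F$, this reduces the statement to showing that the equivariant Goeritz signature $\widetilde\sigma(F)$ vanishes whenever $D$ is alternating.

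The vanishing follows from two classical ingredients. First, for an alternating diagram, one of the two checkerboard surfaces carries a positive-definite Goeritz pairing and the other a negative-definite one (see \cite{MR500905}); after applying a sign convention if necessary, we may assume the Goeritz form $G_F$ on $H_1(F;\R)$ is positive definite. Since $\rho_*$ is induced by an orientation-preserving diffeomorphism preserving $F$, it is an isometry of $G_F$, so the eigenspace decomposition $H_1(F;\R) = E_+ \oplus E_-$ is $G_F$-orthogonal and $G_F$ restricts to a positive-definite form on each factor. In particular,
\[
\widetilde\sigma(F) \;=\; \dim E_+ - \dim E_- \;=\; \mathrm{tr}\bigl(\rho_*|_{H_1(F;\R)}\bigr).
\]
Second, the Lefschetz fixed-point theorem applied to $\rho|_F$ (recalling that $H_2(F;\R) = 0$ because $F$ has non-empty boundary, and that $\rho_*$ acts trivially on $H_0(F;\R)=\R$ because $F$ is connected) gives
\[
\chi(\mathrm{Fix}(\rho|_F)) \;=\; L(\rho|_F) \;=\; 1 - \mathrm{tr}\bigl(\rho_*|_{H_1(F;\R)}\bigr).
\]
By admissibility, $\mathrm{Fix}(\rho|_F) = F \cap A$ is a single properly embedded arc---namely one of the two arcs into which the axis $A$ is cut by $K$---so $\chi(\mathrm{Fix}(\rho|_F)) = 1$. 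Combining the two displays yields $\mathrm{tr}(\rho_*|_{H_1(F;\R)}) = 0$, hence $\widetilde\sigma(F) = 0$, as desired.

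The main technical point is justifying that $F \cap A$ is exactly one arc, which should follow directly from Definition~\ref{def:admissible} once the fixed locus of $\rho$ on an admissible checkerboard surface is analyzed. A separate small check is needed to ensure the classical definiteness statement applies to the admissible checkerboard surface rather than just to \emph{some} checkerboard surface of the alternating diagram; in the non-reduced case one may also need to first pass to a reduced alternating diagram via equivariant Reidemeister moves that preserve both sides of the identity, so that the Goeritz form is genuinely definite rather than merely semi-definite.
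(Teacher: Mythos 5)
Your proof is correct and follows essentially the same route as the paper: reduce via Theorem \ref{thm:diagrammatic_euler_formula} to showing that the equivariant (Gordon--Litherland) signature of the admissible checkerboard surface vanishes, which holds because the form is definite and the $\pm 1$-eigenspaces of $\rho_*$ have equal dimension. The only difference is that you justify the equal-dimensionality by the Lefschetz fixed point theorem (using that $\mathrm{Fix}(\rho|_F)=\overline{h}$ is a single arc), a point the paper asserts without proof, and your caution about passing to a reduced diagram is unnecessary: the Goeritz form of any connected alternating knot diagram is semi-definite and nondegenerate (its determinant is $\pm\det(K)\neq 0$), hence definite.
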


In \cite{MR1102258}, Sakuma shows that invertible knots form an infinitely generated group $\mathcal{C}^\text{inv}$ under the equivalence relation of equivariant concordance. At the moment of writing very little is known about this mysterious group, although it seems likely that $\mathcal{C}^\text{inv}$ is non-abelian, and that it contains a copy of $\Z*\Z$.  Analogously to the usual knot signature, we show that $\widetilde{\sigma}(K)$ defines a group homomorphism 
\[
\widetilde{\sigma}: \mathcal{C}^\text{inv}\to \Z \ .
\] 
Unlike the usual knot signature however, the correction term means that $\widetilde{\sigma}(K)$ does not give a lower bound on the equivariant four-genus $\widetilde{g}_4(K)$; see Example \ref{exa:7_4}. Instead it gives a lower bound on the \emph{butterfly 4-genus} $\widetilde{bg}_4(K)$, which is the minimum genus of an invariant surface in $B^4$ with a connected and separating fixed point set. 
\begin{thm} Let $K \subset S^3$ be a directed strongly invertible knot. Then
\[
\widetilde{\sigma}(K) \leq 2\widetilde{bg}_4(K).
\]
\end{thm}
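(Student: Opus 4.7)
The plan is to apply the Atiyah-Singer $G$-signature theorem to the double branched cover of $B^4$ over an invariant surface realizing $\widetilde{bg}_4(K)$, in complete analogy with the classical derivation of $|\sigma(K)| \leq 2g_4(K)$ from $|\sigma(W)| \leq b_2(W)$.

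Let $F \subset B^4$ be an invariant surface with $\partial F = K$, connected separating fixed-point set, and $g(F) = \widetilde{bg}_4(K)$. Because the axis of the strong inversion is an unknotted circle in $S^3$, $\rho$ extends to a smooth orientation-preserving involution of $B^4$ which preserves $F$. Form the double branched cover $W = \Sigma_2(B^4, F)$; the extended involution then lifts to an involution $\tau$ of $W$, and we fix the lift compatible with the chosen direction of $K$. The map $\tau_*$ acts on $H_2(W;\R)$, splitting it as $E_+ \oplus E_-$, and the $G$-signature is
\[
\sigma(\tau, W) = \sigma(Q_W|_{E_+}) - \sigma(Q_W|_{E_-}),
\]
where $Q_W$ is the intersection form.

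The crux of the argument is the identification $\widetilde{\sigma}(K) = \sigma(\tau, W)$. This should follow from the design of the correction term $e(\Delta,\widetilde{\gamma})$: it is introduced, via the $G$-signature theorem, precisely so that for any invariant surface in a 4-manifold bounded by $S^3$ whose fixed set is connected and separating, the $G$-signature of the associated double branched cover equals $\widetilde{\sigma}(K)$. To verify this, I would reduce to the case of an invariant Seifert surface $F_0 \subset S^3$ (pushed into a second copy of $B^4$) by gluing $W$ to $W_0 = \Sigma_2(B^4, F_0)$ along the common boundary $\Sigma_2(S^3,K)$, producing a closed double branched cover $\widehat W$ of $S^4$ over a closed invariant surface. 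Novikov additivity for the $G$-signature, together with a direct application of the $G$-signature theorem to the closed manifold $\widehat W$ to match the fixed-point contributions with $e(\Delta,\widetilde{\gamma})$, then yields the desired identity. The hypothesis that $\mathrm{Fix}(\rho|_F)$ is a single separating arc is exactly what ensures the fixed locus of $\tau$ in $W$ is a closed surface whose self-intersection assembles into the diagrammatic correction term.

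Granting this identification, the bound is then a straightforward dimension count. A standard Euler characteristic and Mayer-Vietoris computation for a double branched cover of $B^4$ along an orientable genus $g(F)$ surface with connected boundary shows $b_2(W) \leq 2g(F)$. Since $\sigma(Q_W|_{E_+}) \leq \dim E_+$ and $-\sigma(Q_W|_{E_-}) \leq \dim E_-$, we get
\[
\widetilde{\sigma}(K) = \sigma(\tau, W) \leq \dim E_+ + \dim E_- = b_2(W) \leq 2g(F) = 2\widetilde{bg}_4(K).
\]

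The main obstacle is the identification step: one must extend the Gordon-Litherland-type computation underlying the definition of $\widetilde{\sigma}(K)$ from Seifert surfaces in $S^3$ to properly embedded invariant surfaces in $B^4$. The connectedness and separating assumptions on the fixed-point set of $\rho|_F$ enter here in an essential way, both to produce a well-behaved lift $\tau$ and to ensure the fixed-set contribution in the $G$-signature formula reproduces the correction term $e(\Delta,\widetilde{\gamma})$.
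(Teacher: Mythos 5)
Your overall skeleton matches the paper's: take a minimal-genus butterfly surface $F$, pass to $W=\Sigma_2(B^4,F)$ with the distinguished lift, and bound the $G$-signature by $b_2(W)=b_1(F)=2g(F)$. That last dimension count is fine. The gap is in what you yourself call the crux, the identification $\widetilde{\sigma}(K)=\sigma(\tau,W)$, which is equivalent to the vanishing of the correction term $e(\Delta,\widetilde{\gamma})$ for a butterfly surface. Your proposed verification -- glue $W$ to $W_0=\Sigma_2(B^4,F_0)$ for a pushed-in invariant Seifert surface, apply Novikov additivity and the closed $G$-signature theorem, and ``match the fixed-point contributions'' -- does not prove this. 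That gluing argument is exactly the paper's proof that $\widetilde{\sigma}$ is independent of the choice of equivariant surface: it yields $\sigma(\tau,W)-e(\Delta,\widetilde{\gamma})=\sigma(\tau_0,W_0)-e(\Delta_0,\widetilde{\gamma})=\widetilde{\sigma}(K)$, and gives no information whatsoever about $e(\Delta,\widetilde{\gamma})$ itself. So at the end of your argument you have only $\widetilde{\sigma}(K)+e(\Delta,\widetilde{\gamma})=\sigma(\tau,W)\leq 2\widetilde{bg}_4(K)$, which is not the theorem. The correction term cannot ``follow from the design'' formally: the paper's Example with $7_4$ shows it is nonzero for a perfectly good equivariant (even genus-minimizing) surface, which is precisely why $\widetilde{\sigma}$ bounds the butterfly genus and not $\widetilde{g}_4$.

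What is missing is a genuinely geometric use of the butterfly hypothesis, and this is the content of the paper's Theorem on butterfly surfaces. There one takes the fixed disk $D\subset D^4$ of $\rho$, notes that $D\cap F$ is a single arc $\beta$, and performs a finger move of $F$ along the component $D'$ of $D\setminus\beta$ meeting $h'$. Because $\beta$ \emph{separates} $F$, the band surgery this produces on $K$ yields a two-component link of linking number zero, which is exactly the defining property of the canonical longitude $\gamma$. The parallel copies of $D'$ created by the finger move then lift to a nowhere-vanishing section of the normal bundle of $\Delta=\mathrm{Fix}(\widetilde{\rho})$ in $W$ whose boundary value is $\widetilde{\gamma}$, forcing $e(\Delta,\widetilde{\gamma})=0$. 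Some argument of this kind, tying the separating-arc condition to the linking-number-zero normalization of $\widetilde{\gamma}$, is unavoidable; also note the fixed locus of $\tau$ in $W$ is a surface with boundary $\widetilde{h}'$, not a closed surface as you assert, so the self-intersection in question is genuinely the \emph{relative} Euler number and depends on the boundary framing.
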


In \cite{MR1102258} strongly invertible knots are investigated by means of the $\eta$-polynomial, a group homomorphism $\eta: \mathcal{C}^\text{inv}\to \Z[t,t^{-1}]$. We show (Proposition \ref{kerneldetection}) that $\widetilde{\sigma}$ can be used to detect elements in the kernel of $\eta$.

Note that to have a well defined connected sum operation on the equivariant knot concordance group $\mathcal{C}^\text{inv}$ it is necessary to decorate strongly invertible knots with a \emph{direction} (or a \emph{framing} in the language of Sakuma). See Definition \ref{defdirection} below. Similarly to the case of the classical knot concordance group, where Livingston famously showed that different orientations may represent different concordance classes, using the invariant $\widetilde{\sigma}$ we show (Proposition \ref{direction}) that in the equivariant case different directions may represent different classes in $\mathcal{C}^\text{inv}$.  Another proof of this fact recently appeared in \cite{BoyleIssa}.

\section{The strongly invertible concordance group}
\subsection{Strongly invertible knots and equivariant surfaces}
\begin{defi}
A knot involution is a pair $(K,\rho)$ where $K \subset S^3$ is a knot, and  $\rho:S^3\to S^3$ is a non-trivial orientation-preserving involution of $S^3$ fixing $K$ setwise. 
\end{defi}

Let $(K,\rho)$ be a knot involution. Since any involution of $S^3$ is conjugate to an involution in $SO(4)$, we have two categories: 
\begin{enumerate}
\item $\rho$ is conjugate to the antipodal map, or
\item  $\rho$ is conjugate to rotation around an unknot.
\end{enumerate}
If we also consider the fact that $\rho$ can act on $K\simeq S^1$ either as $z\mapsto -z$ or $z\mapsto \bar{z}$, there are two possible types of involution in the second category; see Table \ref{table:symmetries}. In this paper we will discuss the case of strongly invertible knots, and in particular their four-dimensional topology.

\begin{defi} A surface $F \subset D^4$ with boundary a strongly invertible knot $K$ is called equivariant if there is an involution $\rho:D^4\to D^4$ restricting to the knot involution on $\partial D^4=S^3$ such that $\rho(F)=F$. 
\end{defi}

\begin{remk} By Smith theory the fixed point set of $\rho|_F:F \to F$ consists of an arc and a collection of pairwise disjoint circles $\Gamma$ and $|\Gamma| \leq g(F)$. If $F$ is non-orientable there may also be up to $n$ isolated fixed points with $n+2|\Gamma|\leq b_1(F)$.
\end{remk}

It is natural to define the equivariant four-genus of a strongly invertible knot $\widetilde{g}_4(K)$ as the minimum genus of an equivariant orientable surface in $B^4$ bounding $K$.

\begin{table}[t] 
\vspace{0.3cm}
\begin{tabular}{ |c|c|c| } 
 \hline
$\rho$ is conjugated to:  & Antipodal Map & Rotation \\ 
  \hline
$\rho|_K: z\mapsto -z$   &  \begin{tabular}{@{}c@{}}\textbf{freely} \\ \textbf{periodic}\end{tabular} & \textbf{periodic}  \\ 
 \hline
$\rho|_K: z\mapsto \bar{z}$ &\begin{tabular}{@{}c@{}}not \\ possible\end{tabular} &\begin{tabular}{@{}c@{}}\textbf{strongly} \\ \textbf{invertible}\end{tabular} \\ 
 \hline
\end{tabular}

\vspace{0.3cm}\label{fig}
\caption{There are $3$ distinct types of knot involutions preserving the orientation on $S^3$.}
\label{table:symmetries}
\end{table}
 
\subsection{The concordance group of strongly invertible knots}
Two knots $K_0$ and $K_1$ are called concordant if  there exists a smoothly embedded cylinder $C \subset S^3 \times I$ such that $\partial C= C \cap \partial (S^3 \times I ) = K_0 \cup -K_1$. If $(K_0, \rho_0)$ and $(K_1, \rho_1)$ are strongly invertible knots we say that they are equivariantly concordant knots if there exists an involution $\rho: S^3 \times I \to S^3 \times I$ restricting to $\rho_0\cup \rho_1$ on $\partial (S^3 \times I )$ such that $\rho(C)=C$.

We denote by $\mathcal{C}^\text{inv}$ the set of invertible knots modulo the equivalence relation defined by equivariant concordance. In order to have a well defined sum operation on $\mathcal{C}^\text{inv}$ it is necessary to take into account some appropriate decorations. 

\begin{defi}[Directed knots]\label{defdirection} Let $K$ be a strongly invertible knot with axis $A$. Then $A\setminus K=h\cup h'$ consists of two arcs called half-axes. A direction of $K$ consists of a choice of a basepoint on either $h$ or $h'$ and of an orientation of the marked half-axis. \end{defi}

In particular a direction prescribes a preferential basepoint in the fixed set $K\cap A$, namely the terminal endpoint of the marked half-axis. In \cite{MR1102258}, Sakuma refers to directed knots as framed knots; we have chosen to avoid the term \emph{framed} since it has come to be commonly used to refer to a choice of longitude. 

Given two \emph{directed} strongly invertible knots $K_0$ and $K_1$ we define their connect sum as follows.

\begin{defi}
Let $K$ and $K'$ be a pair of directed strongly invertible knots with chosen (oriented) half-axes $h_K$ and $h_{K'}$ respectively. Then their \emph{equivariant connect sum} $K \# K'$ is given by taking the connect sum at the terminal fixed point of $h_K$ and the initial fixed point of $h_{K'}$ so that $K \# K'$ has an obvious strong inversion, see Figure \ref{fig:connect_sum}. The chosen half axis is then $h_{K\#K'} = h_K \cup h_{K'}$.
\end{defi}

With this definition of connected sum the set of \emph{directed} strongly invertible knots (up to equivariant concordance) forms a group  $\mathcal{C}^\text{inv}$. Note that the same equivariant knot with two different directions may lead to two distinct concordance classes, see Proposition \ref{direction} below. 

\begin{figure}
\begin{overpic}[width=300pt, grid = false]{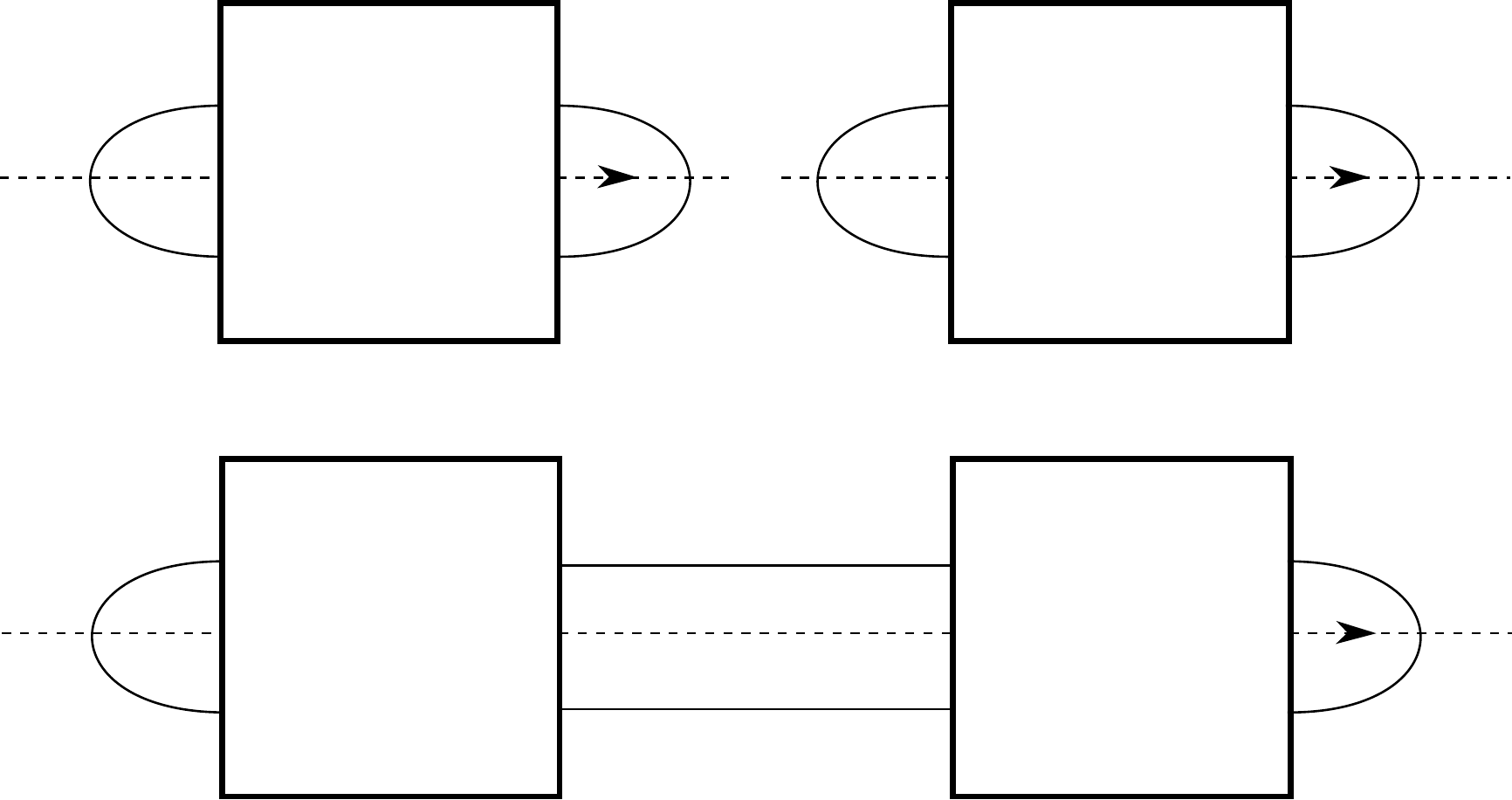}
\put (21.5, 38.5) {\huge{$K$}}
\put (69.3, 38.5) {\huge{$K'$}}
\put (9.5,42) {\tiny{$h_K$}}
\put (57.5,42) {\tiny{$h_{K'}$}}
\put (21.5, 8.5) {\huge{$K$}}
\put (69.3, 8.5) {\huge{$K'$}}
\put (45,12.5) {\tiny{$h_{K \# K'}$}}
\end{overpic}
\caption{The equivariant connect sum $K\#K'$ (bottom) of two strongly invertible knots $K$ and $K'$ (top).}
\label{fig:connect_sum}
\end{figure}

\section{Lifting the strong inversion to branched coverings}

\subsection{Three-dimensional branched coverings} Let $K \subset S^3$ be an oriented knot. In what follows we denote by $ \Sigma^p(K)=\Sigma^p(S^3, K)$ the $p$-fold branched cover of $S^3$ along the knot $K$. The following lemma describes the lifts of the strong inversion to $\Sigma^p(K)$.

\begin{lem} \label{lemma:involution_lifts}
Let $(K,\rho)$ be a strongly invertible knot, $\Sigma^p(K)$ be its $p$-fold cyclic branched cover, $\tau:\Sigma^p(K) \to \Sigma^p(K)$ be a generator of the deck transformations, and $\pi:\Sigma^p(K) \to S^3$ the projection map. Then there are exactly $p$ involutions $\{\widetilde{\rho}_1,\dots,\widetilde{\rho}_p\}$ of $\Sigma^p(K)$ such that $\pi \circ \widetilde{\rho}_i = \rho$. Furthermore, the group generated by $\{\widetilde{\rho}_1,\dots,\widetilde{\rho}_p,\tau \}$ has presentation 
\[
\langle \widetilde{\rho}_1,\tau \mid (\widetilde{\rho}_1)^2 = \tau^p = (\tau \circ \widetilde{\rho}_1)^2 = 1 \rangle
\]
and therefore it is isomorphic to the dihedral group $D_p$. In particular, up to conjugation $\rho$ has a unique lift $\widetilde{\rho}:\Sigma^p(K) \to \Sigma^p(K)$  for $p$ odd, and exactly two lifts $\widetilde{\rho}_1$ and $\widetilde{\rho}_2$  for $p$ even.
\end{lem}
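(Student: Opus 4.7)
The plan is to build the dihedral group structure on the lifts in three steps: produce the $p$ lifts via covering-space theory, pin down the conjugation relation $\widetilde{\rho}\tau\widetilde{\rho}^{-1}=\tau^{-1}$ from the orientation-reversing action on $K$, and finally prove $\widetilde{\rho}^2=\id$ using a fixed-point argument on an unbranched fiber.

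First I would pass to the unbranched cyclic cover $\Sigma^p(K)\setminus\pi^{-1}(K)\to S^3\setminus K$, which is classified by the surjection $\pi_1(S^3\setminus K)\twoheadrightarrow\Z/p$ sending a meridian to $1$. Because $\rho$ reverses the orientation of $K$, the induced automorphism $\rho_*$ sends a meridian to its inverse, so it preserves the kernel of this classifying map. Covering-space theory then yields a lift of $\rho|_{S^3\setminus K}$, and the lift extends continuously over the branch locus because it permutes preimages of $K$. Since lifts form a torsor over the deck group $\langle\tau\rangle\cong\Z/p$, there are exactly $p$ of them. For any such lift $\widetilde{\rho}$, the composition $\widetilde{\rho}\tau\widetilde{\rho}^{-1}$ covers $\id$, so equals $\tau^a$ for some $a$; chasing a meridian loop through the lifting correspondence (using $\rho_*=-1$ on $H_1(S^3\setminus K)$) identifies $a\equiv -1\pmod p$.

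The central step is showing $\widetilde{\rho}^2=\id$. A priori one only knows $\widetilde{\rho}^2=\tau^k$ for some $k\in\Z/p$, and the conjugation relation implies $k$ is independent of the choice of lift. To determine $k$ I pick a point $q\in A\setminus K$ — such points exist because $A$ is a circle and meets $K$ in only two points — so that its fiber $\pi^{-1}(q)=\{\tilde q_i=\tau^i\tilde q_0\}$ is a free $\langle\tau\rangle$-orbit. Writing $\widetilde{\rho}(\tilde q_0)=\tilde q_c$ and using the conjugation relation,
\[
\widetilde{\rho}(\tilde q_i)=\widetilde{\rho}\tau^i\tilde q_0=\tau^{-i}\widetilde{\rho}(\tilde q_0)=\tilde q_{c-i},
\]
so $\widetilde{\rho}^2$ fixes every point in the fiber. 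Freeness of the $\langle\tau\rangle$-action there forces $\tau^k=\id$, i.e.~$k=0$.

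With the three relations $\widetilde{\rho}_1^2=\tau^p=(\tau\widetilde{\rho}_1)^2=1$ established, the subgroup $\langle\widetilde{\rho}_1,\tau\rangle$ has order at most $2p$; on the other hand the $p$ deck transformations $\tau^i$ and the $p$ lifts $\tau^i\widetilde{\rho}_1$ are pairwise distinct, so the subgroup has order exactly $2p$ and is isomorphic to $D_p$. A short computation using $\tau\widetilde{\rho}_1=\widetilde{\rho}_1\tau^{-1}$ gives $\tau^j(\tau^i\widetilde{\rho}_1)\tau^{-j}=\tau^{i+2j}\widetilde{\rho}_1$, so conjugation by the deck group acts on lifts by $i\mapsto i+2$: for $p$ odd this is transitive, while for $p$ even there are exactly two orbits, distinguished by the parity of $i$. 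I expect the main obstacle to be the fiber computation establishing $\widetilde{\rho}^2=\id$ — without it one cannot exclude $\widetilde{\rho}^2=\tau^{p/2}$ when $p$ is even — whereas the rest of the argument is essentially bookkeeping in the dihedral group.
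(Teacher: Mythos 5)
Your proof is correct, but it takes a genuinely different route from the paper. The paper argues by building an explicit model of $\Sigma^p(K)$: it cuts $S^3$ along a ($\rho$-invariant) Seifert surface $F$, writes the branched cover as $p$ copies of the complementary piece $Y$ glued cyclically along $R_\pm$, and then simply defines the lifts by $\widetilde{\rho}_g(x,j)=(\rho(x),g(j))$ for the order-two elements $g$ of the standard $D_p$-action on the index set, so the dihedral structure is visible by construction. You instead work intrinsically with the exterior: the lifting criterion for the $\Z/p$ cover (using $\rho_*=-1$ on $H_1(S^3\setminus K)$, which both guarantees the kernel is preserved and forces $\widetilde{\rho}\tau\widetilde{\rho}^{-1}=\tau^{-1}$), an extension over the branch locus, and then the fiber argument over a point of the axis $A\setminus K$ to rule out $\widetilde{\rho}^{\,2}=\tau^k$ with $k\neq 0$. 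That last step is the real content and you have it right: it is precisely where the strong-inversion hypothesis (fixed points of $\rho$ off $K$) enters, and it is the step the paper gets for free from its explicit model but at the cost of having to choose an invariant Seifert surface and check well-definedness of $\widetilde{\rho}_g$ on the gluing, which the paper leaves implicit. Your approach avoids any choice of invariant surface and isolates why the freely periodic case would behave differently. Two small points you may want to tighten: the continuous extension of the lift over $\pi^{-1}(K)$ deserves a sentence (e.g.\ via the local model $(\theta,z)\mapsto(\theta,z^p)$ on a tubular neighborhood, or the Fox completion), and for the conjugacy count when $p$ is even you only consider conjugation by $\langle\tau\rangle$; a one-line check that conjugation by a reflection sends $\tau^j\widetilde{\rho}_1$ to $\tau^{2i-j}\widetilde{\rho}_1$, preserving the parity of $j$, closes the argument that there are exactly two classes in the full group.
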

\begin{proof}
Choose a Seifert surface $F \subset S^3$ of $K$. Let $N$ be a neighborhood of $F \subset S^3$ modeled on $F\times I/\sim$ where $(x, t) \sim (x,t')$ if and only if $x\in \partial F$, and $Y=S^3 \setminus N$ be the three-manifold obtained from $S^3$ cutting along $F$. Note that the boundary of $Y$ consists of two parts $R_+$, and $R_-$ with $R_+\simeq F $,  $R_-\simeq -F$, and  $R_+\cap R_-=K$. In particular $\partial Y =D(F)$, the double of $F$. We have that 
\[\Sigma^p(K)= \bigcup_{i \in \Z/p\Z} Y\times i / \sim \]
where $R_+\times i$ is identified with with $R_-\times i+1$. In this model a generator for the deck transformations $\tau: \Sigma^p(K) \to \Sigma^p(K)$ acts as $(x,i)\mapsto(x,i+1)$.

Consider the standard $D_p$-action on the index set $\{1,2,\dots,p\}$. For $g \in D_p$ define $\widetilde{\rho}_g(x, j) = (\rho(x),g(j))$. Then $\{\widetilde{\rho}_1,\dots,\widetilde{\rho}_p\}=\{\widetilde{\rho}_{g_1},\dots,\widetilde{\rho}_{g_p}\}$ where $\{g_1,\dots,g_p\}$ denotes the order two elements of the dihedral group $D_p$. 
 \end{proof}

\begin{figure}[t]
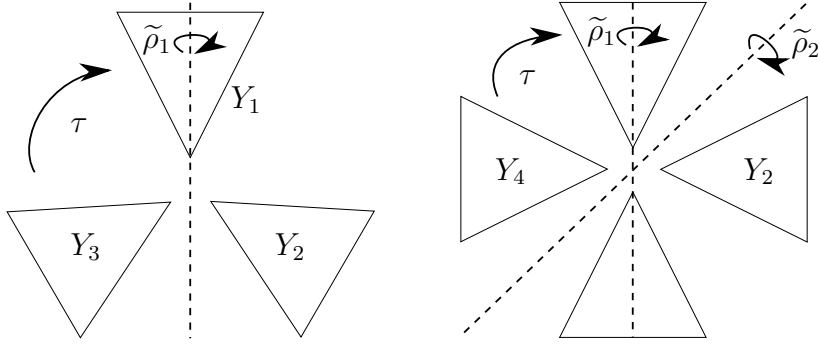

\begin{overpic}[width=300pt, grid = false]{Diagrams/involution_lifts.pdf}
\put (17, 36.5) {$\widetilde{\rho}_1$}
\put (72.5, 37.5) {$\widetilde{\rho}_1$}
\put (98,36) {$\widetilde{\rho}_2$}
\put (8,26) {$\tau$}
\put (64,32) {$\tau$}
\put (28,29) {$Y_1$}
\put (33.5,10.5) {$Y_2$}
\put (8,10.5) {$Y_3$}
\put (92,20) {$Y_2$}
\put (61,20) {$Y_4$}
\end{overpic}
\caption{On the left is a pair of generators for the $D_3$ symmetry of the three-fold branched cover of a strongly invertible knot. On the right is the four-fold branched cover, and two lifts $\widetilde{\rho}_1$ and $\widetilde{\rho}_2$ which are in different conjugacy classes of $\tau$.}
\label{fig:Involution_lifts}
\end{figure}


In the case where we fix a direction on a $(K,\rho)$ so that we have a pointed half-axis $h$ and a non-pointed half-axis $h'$, we can distinguish between $\widetilde{\rho}_1$ and $\widetilde{\rho}_2$ in Lemma \ref{lemma:involution_lifts} since one fixes the preimage $\pi^{-1}(h)$ and the other fixes the preimage $\pi^{-1}(h')$. Following \cite{BoyleIssa}, we will designate the lift which fixes $\pi^{-1}(h')$ and acts non-trivially on $\pi^{-1}(h)$ as the \emph{distinguished lift} $\widetilde{\rho}$ of $\rho$.

\subsection{Four-dimensional branched coverings} 
Suppose  $F \subset D^4$ is a properly embedded surface with boundary a strongly invertible knot $K=\partial F$ in $S^3$. Then we can  form the $p$-fold branched cover $\Sigma^p(F, D^4)$ of the four-disk $D^4$ along  the surface $F$. Suppose that $F$ is invariant under an involution $\rho:D^4 \to D^4$ which restricts to a strong inversion on $K$. Then by a similar argument to that for Lemma \ref{lemma:involution_lifts} we get a unique (up to conjugation) lift of $\rho$ to $\Sigma^p(D^4,F)$ for $p$ odd, and a pair of lifts to $\Sigma^p(D^4,F)$ for $p$ even. Specifying a direction on $K$ again determines a unique \emph{distinguished lift} $\widetilde{\rho}:\Sigma^p(D^4,F) \to \Sigma^p(D^4,F)$ for which the fixed set on the boundary is in the preimage of the non-pointed half-axis $h'$.



 
\section{G-signatures and the Atiyah-Singer formula} 

\subsection{Representation ring}
Let $G$ be a finite group. Recall that given two representations $V$ and $W$ of $G$ we can form the direct-sum representation $V \oplus W$ and the tensor product representation $V \otimes W$. 

Denote by $R(G)$ the representation ring of $G$.  This has one generator $[V]$ for each complex representation $V$ of $G$ and relations:
\[[V\oplus W]=[V]+[W] \  , \ \ \  [V \otimes W]= [V] \cdot[W]  \ .\]
Note that for each $g \in G$ we have a ring homomorphism $R(G)\to \C$. This is the character map $\chi_g$ and is defined by
\[\chi_g[V]= \text{trace}(g:V \to V) \ .\]
If $G$ is compact, for $x\in R(G)$ we have that $x=0$ if and only if $\chi_g(x)=0$ for all $g \in G$.

\subsection{G-signature} Let $M$ be a compact, oriented manifold of even dimension $n=2m$. Then Poincar\' e duality gives rise to an Hermitian  pairing
\[Q_M: H_n(M; \C) \otimes H_n(M; \C) \to \C,\ \ \ Q_M(\alpha \otimes \beta)= \int_M \alpha \wedge \overline{\beta} , \]
for all $\alpha$ and $\beta\in H^n(M; \C)\simeq H_n(M; \C)$. Note that $Q_M$ is symmetric if $m$ is even, and skew-symmetric if $m$ is odd. Suppose that $G$ is a finite group acting on $M$ by orientation-preserving diffeomorphism. Then $\alpha \mapsto g^*\alpha$ defines a representation of $G$ on $V=H_n(M, \C)\simeq \C^{b_n(M)}$ preserving the duality pairing. In this case we can find an orthogonal decomposition 
\[V=V_+ \oplus V_-\oplus V_0\]
with $V_+, V_-$ and $V_0\subset V$ sub-representations such that: $Q_M:V_+\otimes V_+\to \C$ is positive-definite,  
$Q_M:V_-\otimes V_-\to \C$ is negative-definite, and $Q_M(v\otimes w)=0$ for all $v, w\in V_0$.
We define the $G$-signature of $M$ as the formal difference 
\[\text{sign}(M, G)= [V_+]-[V_-] \ .\]
Note that the $G$-signature exists as an element of the representation ring $R(G)$. In. what follows we list some well-known properties of the $G$-signature:
\begin{itemize}
\item $\text{sign}(-M, G)=-\text{sign}(M, G)$
\item $\text{sign}(M\times N, G)=\text{sign}(M, G)\cdot \text{sign}(N, G)$,
\item if $(M,G)=(M_1, G) \cup_\partial (M_2, G)$ then $\text{sign}(M,G)=\text{sign}(M_1, G) +\text{sign}(M_2, G)$
\item if $(M,G)=\partial (W,G)$ for some $W$, then  $\text{sign}(M, G)=0$
\end{itemize}

\subsection{Atiyah-Singer signature formula}In \cite{MR236952} Atiyah and Singer use their celebrated index theorem to compute the character associated to the G-signature:
\[\text{sign}(M, g) = \chi_g (\text{sign}(M, G))= \chi_g[V_+]-\chi_g[V_-]= \text{tr}(g|_{V_+})-\text{tr}(g|_{V_-}) \ .\]

For a finite group $G$ acting by orientation-preserving diffeomorphisms on a closed 4-manifold $M$, the fixed point set
\[
\text{Fix}(G)=\{x\in M \text{ such that } g \cdot x = x \text{ for all } g \in G\}
\]
consists of a set $P$ of isolated fixed points and a collection $F_1, \dots, F_m \subset M$ of pairwise disjoint surfaces. Suppose that $G=\Z/m\Z$ is generated by an orientation-preserving  diffeomorphism $g:M \to M$. Then every fixed point $x\in P$ has a neighborhood $D^2\times D^2$  where $g$ acts as $(z,w) \mapsto (\theta_1 \cdot z, \theta_2 \cdot w)$, with $\theta_1^m=\theta_2^m=1$. Similarly a fixed surface $F_i$ has a neighborhood modeled on a $D^2$-bundle with Euler number $e(F_i)=F_i \cdot F_i$ where $g$ acts on the fibers as rotation of angle $\psi_i=2\pi r/m$, with $\text{gcd}(r,m)=1$. 
In this case (see \cite{MR900251}), we have 

\[
\text{sign}(M, g) =  - \sum_{x\in P} \text{cot}\left( \frac{\theta_1}{2}\right) \cdot \text{cot}\left( \frac{\theta_2}{2}\right) + \sum_{i=1}^m e(F_i) \cdot   \text{cosec}^2 \left( \frac{\psi_i}{2}\right) .
\]

In particular if $P=\emptyset$ and $m=2$ the formula simplifies: 
\[\text{sign}(M, g) =
\cancel{ - \sum_{x\in P} \text{cot}\left( \frac{\theta_1}{2}\right) \cdot \text{cot}\left( \frac{\theta_2}{2}\right) } + \sum_{i=1}^m e(F_i) \cdot   \text{cosec}^2 \left( \frac{\psi_i}{2}\right)
= \sum_{i=1}^m F_i \cdot F_i \]
Also note that if $g$ has order two then $H_2(M;\R)=E_+\oplus E_-$, where $E_+$ and $E_-$ denote the two eigenspaces of $g_*:H_2(M;\R) \to H_2(M;\R)$ corresponding to the eigenvalues $+1$ and $-1$. Furthermore, since $E_+$ and $E_-$ are orthogonal: 
\[\text{sign}(M, g) =  \text{tr}(g|_{V_+})-\text{tr}(g|_{V_-})= \text{sign}(E_+)- \text{sign}(E_-)  \ .\]

\section{Correction terms }

\subsection{Canonical longitude}\label{longitude} In what follows we will need a choice of longitude for the fixed axis of the distinguished lift $\widetilde{\rho}:\Sigma^2(K) \to \Sigma^2(K)$. Specifically, let $(K, \rho)$ be a \emph{directed} strongly invertible knot and denote by $A$ the axis of $\rho$. The choice of a direction distinguishes the two arcs of $A \setminus K=h \cup h'$, let $h$ be the one containing the base point. In a neighborhood of $h'$ perform a band surgery as suggested by Figure \ref{fig:band_surgery}, so that the resulting two-component link has linking number zero. This specifies a pair of arcs running parallel to $h'$ with endpoints on $K$. 

\begin{figure}
\includegraphics[width=55mm]{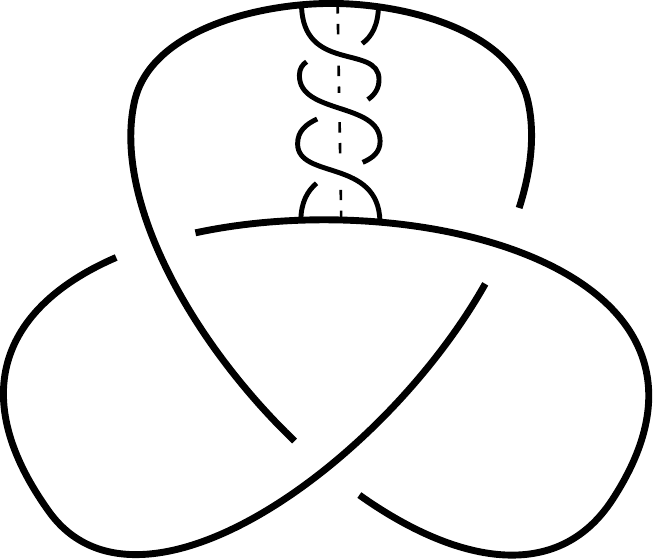}
\caption{A band surgery along one half-axis of the trefoil which produces a 2-component link with linking number 0. Note that in this figure $h'$ is the bounded half-axis and $h$ is the unbounded half-axis.}
\label{fig:band_surgery}
\end{figure}

Choose one of these two arcs and call it $\gamma$. We define the \emph{canonical longitude} $\widetilde{\gamma}$ to be the lift of $\gamma$ to the branched double covering. Note that $\widetilde{\gamma}$ is a longitude of $\widetilde{h'}$. 

\subsection{Euler number} 

Denote by $W = \Sigma^2(D^4,F)$ the branched double cover of $D^4$ along an equivariant surface $F$ with $\partial F = K$, and denote by $\widetilde{\rho}:W \to W$ the distinguished lift of $\rho:(D^4,F) \to (D^4,F)$. Recall that the fixed point set of $\widetilde{\rho} \mid_{\Sigma^2(K)}$ is $\widetilde{h}'$. We then define the equivariant signature of a directed strongly invertible knot $K$ as  follows:
\[\widetilde{\sigma}(K) = \text{sign} \left(W,  \widetilde{\rho} \right) -e(\Delta, \widetilde{\gamma}) \ .\]
where $\Delta \subset W$ denotes the fixed point set of $\widetilde{\rho}$ with $\partial \Delta = \widetilde{h}'$, and $e(\Delta, \widetilde{\gamma})$ denotes the \emph{relative Euler number}, defined as the self-intersection number: 
\[e(\Delta, \widetilde{\gamma})  = \# | \Delta \cap \Delta' | \ .\]
Here $\Delta' \subset W$ denotes a perturbation of $\Delta$ with $\partial \Delta'= \widetilde{\gamma}$, the canonical longitude specified by the direction of $(K,\rho)$ as in Section \ref{longitude}.

\begin{thm} \label{thm:concordance}
The equivariant signature $\widetilde{\sigma}(K)$ does not depend on the choice of the invariant surface $F$. Furthermore, $\widetilde{\sigma}:\mathcal{C}^{inv} \to \mathbb{Z}$ is a group homomorphism from the equivariant concordance group.
\end{thm}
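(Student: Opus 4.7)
The strategy is to apply the simplified Atiyah--Singer formula from the end of Section 4 --- which for an involution with empty isolated fixed set reduces the $g$-signature to the self-intersection of the fixed surface --- together with the gluing axiom for the $G$-signature, to branched double covers of closed or cobording 4-manifolds assembled from equivariant pairs $(D^4, F)$. Throughout I assume the invariant surfaces are orientable, so that the lifted involution has no isolated fixed points and the simplified formula applies directly.

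For independence of the invariant surface, let $F_0, F_1 \subset D^4$ be two equivariant surfaces bounding the directed knot $(K, \rho)$, and form the closed equivariant pair $(S^4, F_0 \cup -F_1)$ by doubling along $K$. Its branched double cover decomposes as $X = W_0 \cup (-W_1)$ with $W_i = \Sigma^2(D^4, F_i)$, and the induced involution $\hat{\rho}$ has fixed set equal to the closed surface $\Delta_0 \cup (-\Delta_1)$. By additivity of the $G$-signature, $\text{sign}(X, \hat{\rho}) = \text{sign}(W_0, \widetilde{\rho}_0) - \text{sign}(W_1, \widetilde{\rho}_1)$; by Atiyah--Singer the same quantity equals the self-intersection $(\Delta_0 \cup -\Delta_1) \cdot (\Delta_0 \cup -\Delta_1)$. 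Because $\widetilde{\gamma}$ depends only on the directed knot $K$ and not on the filling, the pushoffs $\Delta'_i$ realizing the relative Euler numbers $e(\Delta_i, \widetilde{\gamma})$ can be chosen with common boundary $\widetilde{\gamma}$ and hence glue to a global perturbation in $X$. Tracking orientation conventions on $W_1$ versus $-W_1$ gives $(\Delta_0 \cup -\Delta_1) \cdot (\Delta_0 \cup -\Delta_1) = e(\Delta_0, \widetilde{\gamma}) - e(\Delta_1, \widetilde{\gamma})$, and rearranging the equality yields the claimed independence.

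Concordance invariance is obtained by a cobordism version of the same argument. Given an equivariant concordance $C \subset S^3 \times I$ from $K_0$ to $K_1$, pick any invariant surface $F_1 \subset D^4$ bounding $K_1$ and set $F_0 = F_1 \cup_{K_1} C$, an equivariant surface for $K_0$ after identifying $D^4 \cup_{S^3} (S^3 \times I) \cong D^4$. The branched double covers split as $\Sigma^2(D^4, F_0) = \Sigma^2(D^4, F_1) \cup_{\Sigma^2(K_1)} \Sigma^2(S^3 \times I, C)$, and the cobordism piece is a rational homology cobordism since the branched double cover along a knot concordance has $H_2(\cdot; \mathbb{Q}) = 0$. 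Its $G$-signature character therefore vanishes, and additivity forces equal $\text{sign}$ values on the two sides. Verifying that the canonical longitudes at the two ends of the fixed annulus in $\Sigma^2(S^3 \times I, C)$ extend to a common framing then yields matching relative Euler numbers. The homomorphism property is proved by the same template applied to the equivariant boundary connect sum $F_0 \natural F_1$ at the designated fixed endpoints: the branched double cover itself decomposes as a boundary connect sum, so both the $G$-signature and the relative self-intersection are additive.

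The main obstacle is not the $G$-signature bookkeeping --- additivity and Atiyah--Singer do the heavy lifting cleanly --- but the canonical-longitude bookkeeping. One must verify that $\widetilde{\gamma}$, defined via the linking-zero band surgery along the non-pointed half-axis, is genuinely intrinsic to the directed knot (so doubling produces a closed pushoff), that it extends to a framing of the fixed annulus across equivariant concordances (so the cobordism piece contributes zero to the Euler number), and that it merges consistently under the equivariant connect sum. These are essentially checks near the branch locus and the axis, but getting the framing conventions to line up uniformly in all three gluings is the technical crux of the proof.
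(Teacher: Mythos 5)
Your argument for surface-independence is essentially the paper's own proof: glue the two branched double covers along $\Sigma^2(K)$ to get a closed $\mathbb{Z}/2\mathbb{Z}$-manifold, apply Novikov additivity of the $G$-signature together with the Atiyah--Singer fixed-surface formula, and identify the self-intersection of the glued fixed sphere with $e(\Delta_0,\widetilde{\gamma})-e(\Delta_1,\widetilde{\gamma})$ using the two pushoffs with common boundary $\widetilde{\gamma}$. Your sketch of concordance invariance and the homomorphism property actually goes beyond the paper's written proof (which only treats surface-independence), and the plan there --- vanishing equivariant signature of the $\mathbb{Q}$-homology cobordism $\Sigma^2(S^3\times I,C)$ plus the framing/longitude bookkeeping you correctly flag as the remaining check --- is the natural way to complete it.
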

\begin{proof}Suppose that $F_1$ and $F_2$ are two invariant surfaces. Let $W_1$ and $W_2$ denote the branched double covers of $F_1$ and $F_2$ respectively. Let $\Delta_1$ and $\Delta_2$ denote the fixed point sets of the lift of the knot symmetry to $W_1$ and $W_2$ respectively. Furthermore choose $\Delta_1'$ and $\Delta_2'$ perturbations of $\Delta_1$ and $\Delta_2$ so that $\partial \Delta_1= \partial \Delta_2= \widetilde{\gamma}$, the canonical longitude. 

Let $M=W_1 \cup -W_2$ be the manifold obtained gluing $W_1$ and $W_2$ along $\Sigma=\Sigma(K)$. This is a closed $\Z/2\Z$-manifold of dimension four. Inside $M$ the two disks $\Delta_1$ and $\Delta_2$ glue together to form a smooth sphere $S\subset M$ with $\text{Fix}(M,\Z/2\Z)=S$. As consequence of the $G$-signature theorem we have that
\[\text{sign}(M, \Z/2\Z)= S\cdot S \ .\] 
To compute the self-intersection $S\cdot S$ we observe that $S'=\Delta_1'\cup -\Delta_2'$ describes a perturbation of $S$. Consequently, 
\[S\cdot S=\#|S\cap S'|=\# | \Delta_1 \cap \Delta_1' |-\# | \Delta_2 \cap \Delta_2' |=  e(\Delta_1, \widetilde{\gamma})-e(\Delta_2, \widetilde{\gamma}) \ .\] 
On the other hand by Novikov additivity 
\[\text{sign}(M, \Z/2\Z)= \text{sign}(W_1, \Z/2\Z)-\text{sign}(W_2, \Z/2\Z)\ .\]
Hence, $\text{sign}(W_1, \Z/2\Z)-\text{sign}(W_2, \Z/2\Z)= e(\Delta_1, \widetilde{\gamma})-e(\Delta_2, \widetilde{\gamma})$,
from where the identity $\text{sign}(W_1, \Z/2\Z)-e(\Delta_1, \widetilde{\gamma})= \text{sign}(W_2, \Z/2\Z) -e(\Delta_2, \widetilde{\gamma})$ proving that our definition of $\widetilde{\sigma}(K)$ does not depend on the choices made. 
\end{proof}

\section{Computation of the Euler number as  linking number} 
We now give an explicit recipe to compute the Euler number $e(\Delta, \widetilde{\gamma})$ in the case when $W$ is the double branched cover of a sufficiently nice surface.

\subsection{Admissible surfaces} 

\begin{defi} \label{def:admissible}
Let $F \subset S^3$ be a (not-necessarily-orientable) surface with boundary a directed strongly invertible knot $K$ and which is left invariant by the strong inversion. Let $h$ and $h'$ be the half-axes with and without the basepoint respectively. Then $F$ is \emph{admissible} if $h \subset F$ and $h' \cap F = \emptyset$. See Figure \ref{fig:7_4Badmissible} for a pair of examples.
\end{defi}

Note that an admissible surface always exists - take any orientable invariant surface containing $h$, which exists by \cite[Proposition 1]{BoyleIssa} or \cite{Ryota}. Now let $F\subset S^3$ be an admissible surface for a strongly invertible knot $K$. Denote by $\alpha\subset F$ the push-off of $h$ within $F$ with endpoints on $\partial \gamma$. See Figure \ref{fig:canonical_longitude} for an example with the trefoil. 

\begin{lem} \label{lem:relative_euler_linking_formula}
Let $F'\subset D^4$ be an equivariant push-off of $F$ into $D^4$ and let $A = h \cup h'$ be the axis of symmetry. Then with $\gamma, \alpha,$ and $e(\Delta, \widetilde{\gamma})$ as above, we have  
\[e(\Delta, \widetilde{\gamma})=2 \cdot \text{lk}(A,\gamma \cup \alpha)  .\]
\end{lem}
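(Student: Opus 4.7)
The plan is to use the branched double covering $\pi : W \to D^4$ to reduce the intersection count $\#|\Delta \cap \Delta'|$ in $W$ to an intersection count of two disks in $D^4$, and then identify the latter with the classical linking number $\text{lk}(A, \gamma \cup \alpha)$ in $S^3$.

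First I would give an explicit description of $\Delta$. Let $\mathbb{D} \subset D^4$ denote the fixed $2$-disk of $\rho : D^4 \to D^4$, properly embedded with $\partial \mathbb{D} = A = h \cup h'$. Arrange the equivariant push-off $F'$ so that $\eta := F' \cap \mathbb{D}$ is an arc joining the two points of $K \cap A$, and let $\mathbb{D}' \subset \mathbb{D}$ be the component of $\mathbb{D} \setminus \eta$ whose closure contains $h'$. By the distinguished-lift convention, $\widetilde{\rho}$ fixes $\pi^{-1}(h')$ pointwise, so it acts as the identity on every fibre of $\pi$ above $\mathbb{D}'$ and swaps the two sheets above the other component. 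Hence $\Delta = \pi^{-1}(\mathbb{D}')$, which is the branched double cover of $\mathbb{D}'$ along the boundary arc $\eta$ and is therefore topologically a smooth disk with boundary $\widetilde{h}'$.

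Next I would construct $\Delta'$. Take a parallel push-off $\mathbb{D}_{\text{push}} \subset D^4$ of $\mathbb{D}$ whose boundary is the loop $\gamma \cup \alpha \subset S^3$: along $h'$ the push goes to the canonical longitude $\gamma$, and along $h \subset F$ it goes inside $F$ to $\alpha$. This arranges $\mathbb{D}_{\text{push}}$ to meet $F'$ transversely along an arc $\eta_{\text{push}}$ parallel to $\eta$. Let $\mathbb{D}'_{\text{push}}$ be the component of $\mathbb{D}_{\text{push}} \setminus \eta_{\text{push}}$ adjacent to $\gamma$, and set $\Delta' := \pi^{-1}(\mathbb{D}'_{\text{push}})$. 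Then $\pi^{-1}(\gamma) = \widetilde{\gamma}$ lies in $\partial W$ and provides an honest boundary component of $\Delta'$, while $\pi^{-1}(\eta_{\text{push}}) = \eta_{\text{push}}$ lies on the branch locus in the interior of $W$ and serves as an internal seam of $\Delta'$. Hence $\partial \Delta' = \widetilde{\gamma}$, as required.

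Since $\Delta \cap \Delta' = \pi^{-1}(\mathbb{D}' \cap \mathbb{D}'_{\text{push}})$ and $\pi$ is two-to-one away from the branch locus, making the push-off generic so that the intersections $\mathbb{D}' \cap \mathbb{D}'_{\text{push}}$ are transverse and disjoint from $F'$ gives
\[
\#|\Delta \cap \Delta'| \;=\; 2\,\#|\mathbb{D}' \cap \mathbb{D}'_{\text{push}}|.
\]
The remaining task, which I expect to be the main obstacle, is the identification $\#|\mathbb{D}' \cap \mathbb{D}'_{\text{push}}| = \text{lk}(A, \gamma \cup \alpha)$. The standard four-dimensional identity gives $\mathbb{D} \cdot \mathbb{D}_{\text{push}} = \text{lk}(A, \gamma \cup \alpha)$ (self-intersection of a Seifert disk with its pushoff equals the framing). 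Decomposing $\mathbb{D} = \mathbb{D}' \cup \mathbb{D}''$ and $\mathbb{D}_{\text{push}} = \mathbb{D}'_{\text{push}} \cup \mathbb{D}''_{\text{push}}$, the mixed intersection terms vanish for a sufficiently small push, and the key step is to verify $\mathbb{D}'' \cdot \mathbb{D}''_{\text{push}} = 0$. Geometrically, the boundary $\alpha \cup \eta_{\text{push}}$ of $\mathbb{D}''_{\text{push}}$ lies entirely on $F \cup F'$ (since $\alpha \subset F$ by construction), so the push-off of $\mathbb{D}''$ can be arranged to lie in a tubular neighborhood of $F \cup F'$ and hence be disjoint from $\mathbb{D}''$ itself. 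Once this vanishing is in hand, $\mathbb{D}' \cdot \mathbb{D}'_{\text{push}} = \text{lk}(A, \gamma \cup \alpha)$ and the identity $e(\Delta, \widetilde{\gamma}) = 2\,\text{lk}(A, \gamma \cup \alpha)$ follows.
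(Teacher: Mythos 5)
Your overall strategy is the paper's argument transported downstairs: the paper decomposes $W=\Sigma^2(D^4,F')$ into two $4$-balls glued along $F\times[-1,1]$ and counts intersection points in a single ball, whereas you work with the projection $\pi$ and count intersections of $\mathbb{D}'$ with a push-off in $D^4$; the factor of $2$ (deck symmetry versus two-to-one covering) and the identification of the remaining count with $\text{lk}(A,\gamma\cup\alpha)$ play the same roles in both versions. Your identification $\Delta=\pi^{-1}(\mathbb{D}')$, the construction of $\Delta'=\pi^{-1}(\mathbb{D}'_{\text{push}})$ with $\partial\Delta'=\widetilde{\gamma}$, and the equality $\#|\Delta\cap\Delta'|=2\,\#|\mathbb{D}'\cap\mathbb{D}'_{\text{push}}|$ are all correct.

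The weak point is exactly the step you flag as the main obstacle, namely $\mathbb{D}''\cdot\mathbb{D}''_{\text{push}}=0$: the justification you give does not hold as written. Arranging $\mathbb{D}''_{\text{push}}$ to lie in a tubular neighborhood of $F\cup F'$ cannot by itself force disjointness from $\mathbb{D}''$, because $\mathbb{D}''$ is itself the thin bigon cobounded by $h\subset F\subset S^3$ and its push-in $\eta\subset F'$, so it already lies in exactly such a neighborhood (indeed inside the trace of the push-in isotopy). The correct argument uses admissibility and the choice of $\alpha$ concretely: take $\eta_{\text{push}}$ to be the push-in of $\alpha$ and take $\mathbb{D}''_{\text{push}}$ to be the trace of the push-in isotopy over $\alpha$. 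Since $h$ and $\alpha$ are disjoint arcs in $F$, the traces $h\times I=\mathbb{D}''$ and $\alpha\times I=\mathbb{D}''_{\text{push}}$ are disjoint inside the embedded copy of $F\times I$, which gives the vanishing; choosing $\mathbb{D}'_{\text{push}}$ on the side of $F'$ away from this trace likewise kills the two mixed terms, which you also assert without argument even though $\mathbb{D}'$ and $\mathbb{D}''$ share the boundary arc $\eta$ where such intersections could a priori accumulate. With these choices $\mathbb{D}'\cdot\mathbb{D}'_{\text{push}}=\mathbb{D}\cdot\mathbb{D}_{\text{push}}=\text{lk}(A,\gamma\cup\alpha)$ and your proof closes; this repaired step is precisely the counterpart of the paper's observation that the two copies of $\gamma\cup\alpha$ in $D_1$ and $D_2$ glue along $\alpha\subset F\times\{0\}$.
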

\begin{proof}
Recall that $\Sigma(D^4,F')$ can be constructed as the union of two 4-balls $D_1$ and $D_2$, glued along the thickened surface $F \times [-1,1] \subset \partial D_1 = \partial D_2$ via the identity on $F$ and the reflection involution on $[-1,1]$. The relative Euler number is then the sum of signed intersection points in $D_1$ and  $D_2$. In fact by symmetry, there will be an equal number of signed intersection points in $D_1$ as in $D_2$, so it will be sufficient to count interactions in $D_1$, whence the factor of $2$ in the formula. 

Since $F$ is admissible the arc $\gamma$ intersects $F$ only at its endpoints. Gluing together $\alpha$ and $\gamma$  we get a longitude $\alpha \cup \gamma$ of the axis $A$. Note that $\alpha \subset F \times \{0\}$ so that the two copies of $\alpha \cup \gamma$ in $D_1$ and $D_2$ will glue along $\alpha$. Note that  $\widetilde{\gamma} \subset \Sigma(S^3,K)$ is the union of the two copies of $\gamma$ lying in $D_1$ and $D_2$. In particular, lk$(A, \gamma \cup \alpha)$ will be the intersection number of any disk with boundary $\gamma \cup \alpha$ and the portion of the fixed disk $\Delta$ lying in $D_1$. 
\end{proof}

\begin{figure}
\begin{overpic}[width=300pt, grid = false]{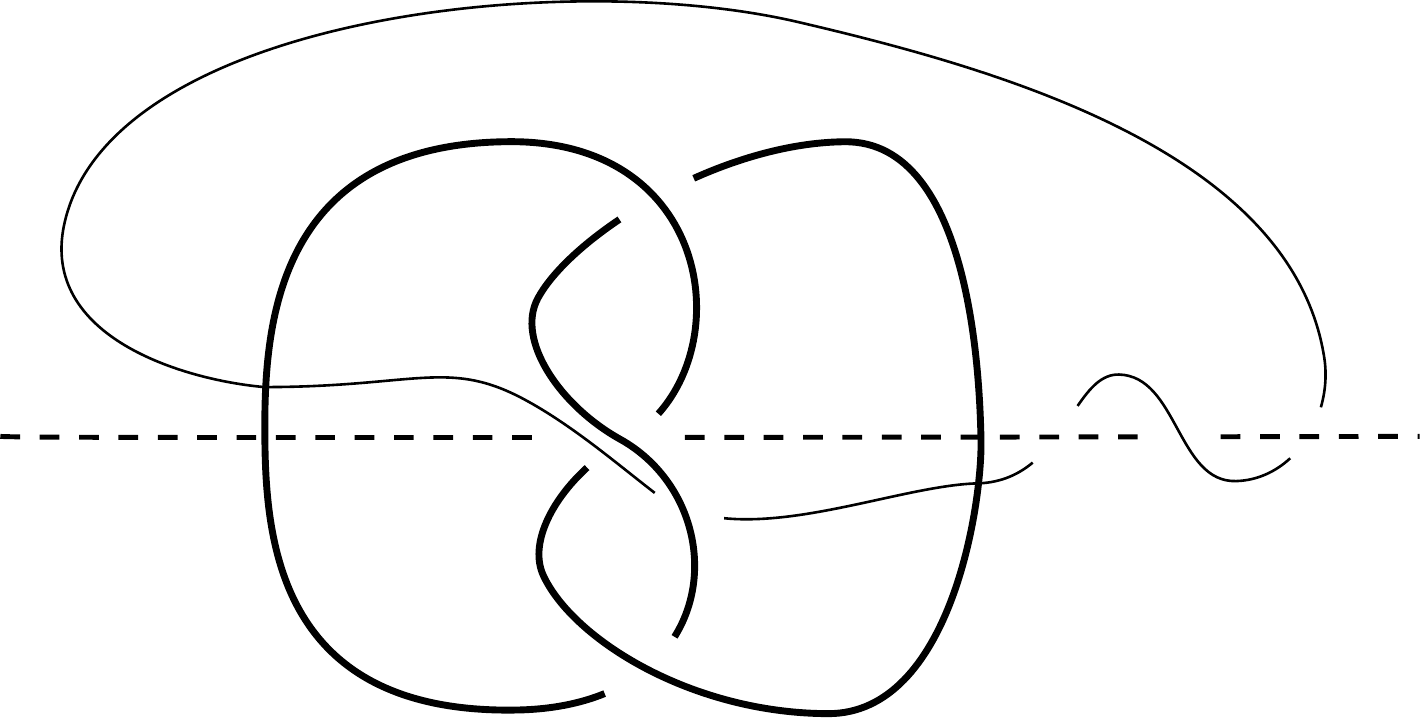}
\put (8, 36) {$\gamma$}
\put (55, 11) {$\alpha$}
\put (6,15) {$h'$}
\put (55,22) {$h$}
\end{overpic}
\caption{The arcs $\gamma$ and $\alpha$ used to build the canonical longitude of the fixed axis in the boundary of the double branched cover of $D^4$ over a push-in of the bounded checkerboard surface.}
\label{fig:canonical_longitude}
\end{figure}

\subsection{Diagrammatic computation of the Euler number} We describe how to compute the relative Euler number combinatorially from certain invariant projections. 

\begin{defi}An \emph{admissible} projection of a directed strongly invertible knot is an invariant knot diagram $D$ with axis of symmetry $A = h \cup h' \subset D$ such that there are no crossings of $D$ along $h'$; see Figure \ref{fig:7_4Badmissible}. 
\end{defi}

Figure \ref{fig:7_4Badmissible} shows an example of an admissible projection. Note that a diagram is admissible if and only if the checkerboard surface containing $h$ is admissible, that is disjoint from $h'$. Let $D$ be an admissible projection. Cutting $K$ at the two fixed points separates $K$ into a pair of arcs $a$ and $b$ with $\overline{a\cup b}=K$. We orient $a$ and $b$ coherently with the half axis $h$ (so that they induce opposite orientations on $K$).
 
\diagrammaticeuler*
\begin{proof}
We will compute $e(\Delta, \widetilde{\gamma})$ using Lemma \ref{lem:relative_euler_linking_formula} by counting the crossings between $\gamma \cup \alpha$ and $A$. It is clear that there is one crossing between $\alpha$ and $A$ for each crossing on $h$, with sign as indicated in Figure \ref{fig:eta_crossing_sign}. 

From the definition of $\gamma$, $a \cup \gamma \cup b \cup \rho(\gamma)$ is a 2-component link with linking number 0. Since $D$ is admissible, there are no crossings between $\gamma \cup \rho(\gamma)$ and $a \cup b$. Thus the signed count of crossings between $a$ and $b$ is the negative of the signed count of crossings between $\gamma$ and $\rho(\gamma)$. Observing that the crossings between $A$ and $\gamma$ correspond exactly with crossings between $\gamma$ and $\rho(\gamma)$, we have that
\[
e(\Delta, \widetilde{\gamma}) = \sum_{c \in h\cap D} \eta(c) - \sum_{c \in a \cap b} \epsilon(c).
\]
However because $a$ and $b$ are oriented so that $\eta(c) = \epsilon(c)$ for crossings on the axis, the first sum cancels exactly with the on-axis crossings in the second sum, giving the desired result.
\end{proof}

\begin{figure}
\begin{overpic}[width=250pt, grid = false]{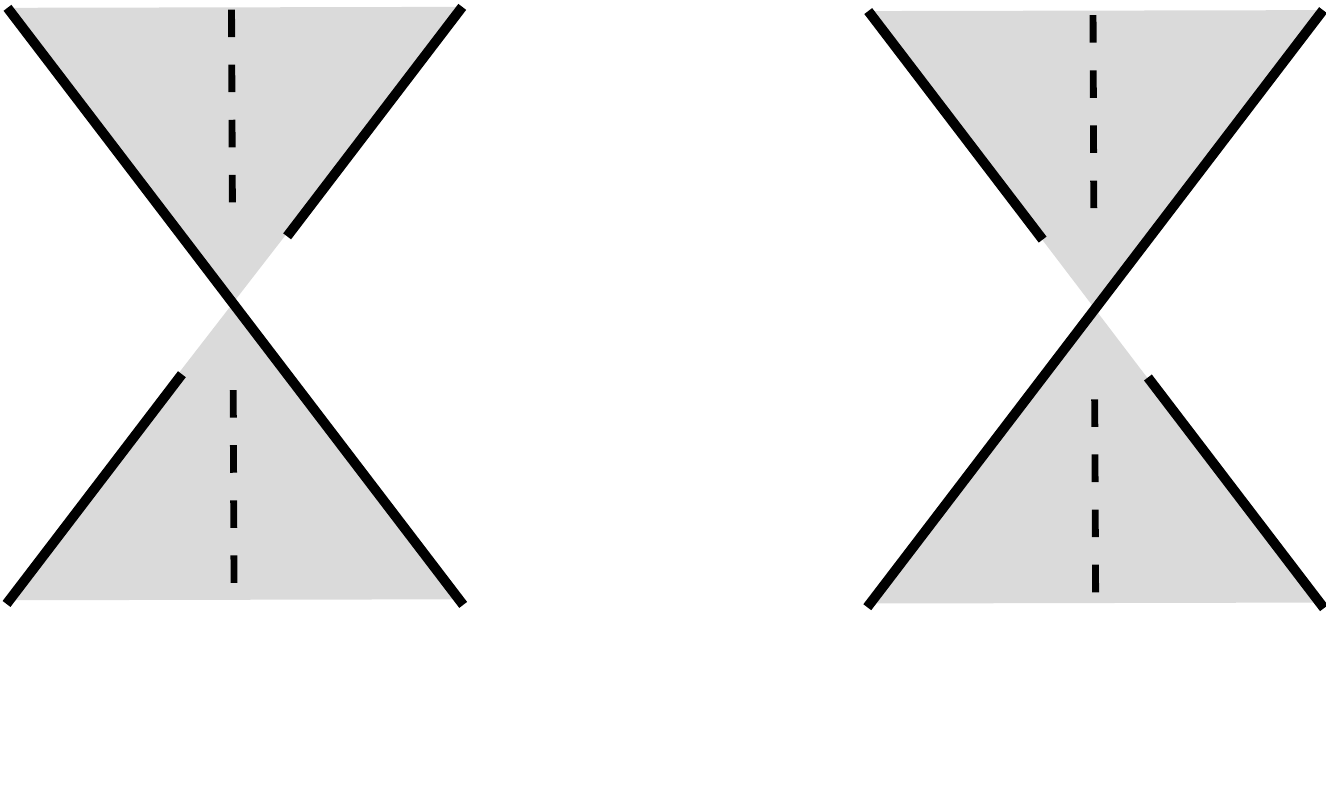}
\put (74, 4) {$\eta(c) = 1$}
\put (8, 4) {$\eta(c) = -1$}
\end{overpic}
\caption{To a crossing $c$ on the half-axis $h$, shown as the dotted line, we associate $\pm 1$ as shown.}
\label{fig:eta_crossing_sign}
\end{figure}

Finally, we give a further simplification when $D$ is an alternating admissible diagram which will facilitate extremely simple computations. In this case, the admissible checkerboard surface has vanishing $g$-signature so that $\widetilde{\sigma}(K) = e(\Delta,\widetilde{\gamma})$.

\alternating*
\begin{proof}
Since $D$ is alternating, the admissible checkerboard surface has a definite linking form. In particular, the half-dimensional eigenspaces $E_+$ and $E_-$ both have maximal (and hence equal) signatures. Thus the $g$-signature vanishes and $\widetilde{\sigma}(K) = -e(\Delta,\widetilde{\gamma})$. Theorem \ref{thm:diagrammatic_euler_formula} then gives the result.
\end{proof}

\section{Examples and applications}

\subsection{Examples}

\begin{exa} \label{exa:7_4}

Consider the directed strongly invertible knot $7_4b^+$ shown as the left-hand diagram in Figure \ref{fig:7_4Badmissible}, and note that this diagram is alternating and admissible so that we can apply Theorem \ref{thm:alternating} to see that $\widetilde{\sigma}(7_4b^+) = -6$. 

On the other hand, consider the right-hand diagram in Figure \ref{fig:7_4Badmissible} and let $F$ be the shown admissible checkerboard surface. We take a basis $\{a,b,c,d,a',b',c',d'\}$ for $H_1(F)$ consisting of the counterclockwise loops around the eight unshaded regions disjoint from the axis. Note that $\rho(x) = -x'$ for $x \in \{a,b,c,d\}$. We can then compute the intersection form on $\Sigma(B^4,F)$ using \cite{MR500905}:
\[
\begin{blockarray}{ccccccccc}
& a & b & c & d & a' & b' & c' & d'\\
\begin{block}{c[cccccccc]}
a  &2 &-1&0 &0 &0 &0 &0 &0 \\
b  &-1& 2&-1&0 &0 &0 &0 &0 \\
c  &0 &-1&2 &-1&0 &0 &0 &0 \\
d  &0 &0 &-1&1 &0 &0 &0 &1 \\
a' &0 &0 &0 &0 &2 &-1&0 &0 \\
b' &0 &0 &0 &0 &-1&2 &-1&0 \\
c' &0 &0 &0 &0 &0 &-1&2 &-1 \\
d' &0 &0 &0 &1 &0 &0 &-1&1 \\
\end{block}
\end{blockarray} .
\]
Next we restrict this intersection form to the $+1$ and $-1$-eigenspaces of the $\rho$ action. Specifically, we have a basis for $E_+$ consisting of elements of the form $x-x'$ for $x \in \{a,b,c,d\}$ and similarly for $E_-$ with elements of the form $x+x'$. We then have the intersection forms
\[
\begin{blockarray}{ccccc}
& a-a' & b-b' & c-c' & d-d' \\
\begin{block}{c[cccc]}
a-a' &4 &-2&0 &0 \\
b-b' &-2& 4&-2&0 \\
c-c' &0 &-2&4 &-2\\
d-d' &0 &0 &-2&0 \\
\end{block} 
\end{blockarray}, \mbox{ and }
\begin{blockarray}{ccccc}
& a+a' & b+b' & c+c' & d+d' \\
\begin{block}{c[cccc]}
a+a' &4 &-2&0 &0 \\
b+b' &-2& 4&-2&0 \\
c+c' &0 &-2&4 &-2\\
d+d' &0 &0 &-2&4 \\
\end{block} 
\end{blockarray}.
\]
so that $\sigma(E_+) = 2$ and $\sigma(E_-) = 4$. Subtracting these, $\sigma(\Sigma(B^4,F)) = -2$, and it remains to compute the relative Euler number $e(\Delta,\widetilde{\gamma})$ using Lemma \ref{lem:relative_euler_linking_formula}. We get 
\[
e(\Delta,\widetilde{\gamma}) = -\sum_{c \in a \cap b, \ c \notin h} \epsilon(c) = 8.
\]
Combining these, $\widetilde{\sigma}(7_4b^-) = \sigma(\Sigma(B^4,F),\widetilde{\rho}) - e(\Delta,\widetilde{\gamma}) = -2 - 8 = -10$. Finally, we note that the left-hand checkerboard surface in Figure \ref{fig:7_4Badmissible} is orientable, so that $\widetilde{g}_4(7_4b) = 1$, and in particular there is no obvious bound on $\widetilde{g}_4$ or $g_4$, coming from $\widetilde{\sigma}$. 
\end{exa}

\begin{figure}
\begin{overpic}[width=350pt, grid = false]{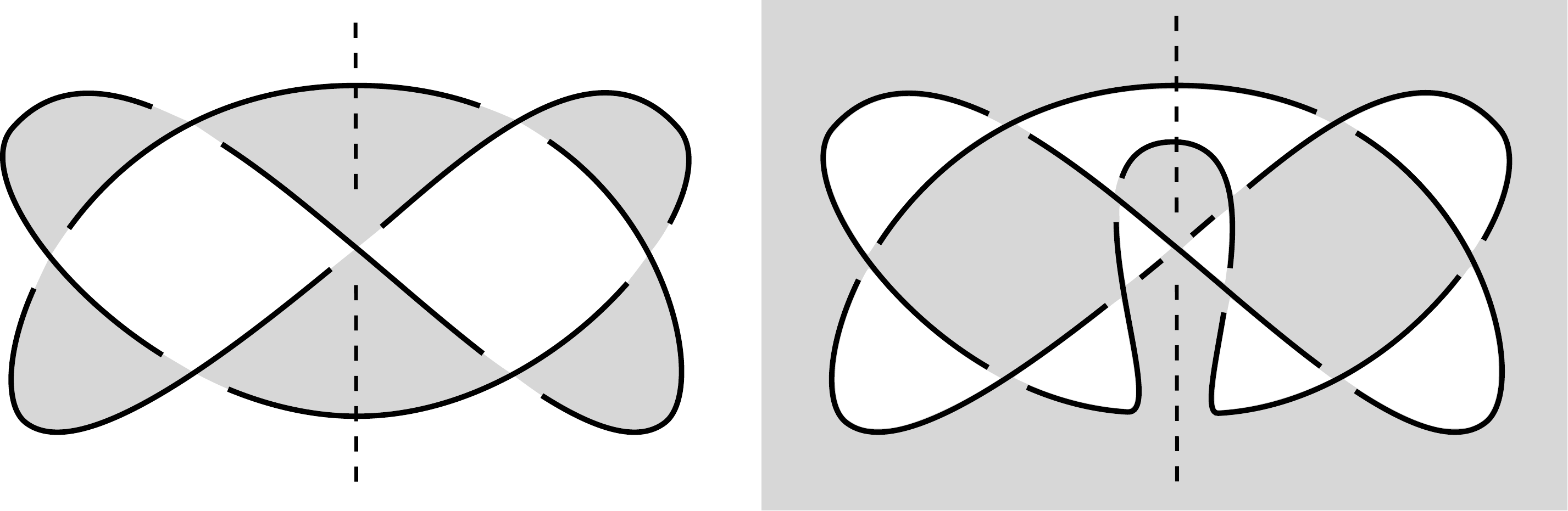}
\put (56,22) {$a$}
\put (56, 8) {$b$}
\put (69, 8) {$c$}
\put (72, 16) {\tiny{$d$}}

\put (91,22) {$a'$}
\put (91, 8) {$b'$}
\put (79, 8) {$c'$}
\put (76.5, 16) {\tiny{$d'$}}

\end{overpic}
\caption{A pair of admissible diagrams for different directions $7_4b^+$ (left) and $7_4b^-$ (right) for a strong inversion on $7_4$. Note in each that the half axis which is not contained in the shaded checkerboard surface is disjoint from that surface.}
\label{fig:7_4Badmissible}
\end{figure}

The following propositions immediately follow from this example (the first also follows from \cite{BoyleIssa}).

\begin{prop}\label{kerneldetection}
There are knots with Sakuma polynomial $\eta(K) = 0$ which are not equivariantly slice.
\end{prop}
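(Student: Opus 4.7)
The plan is to use Example \ref{exa:7_4} together with the homomorphism property of $\widetilde{\sigma}$ established in Theorem \ref{thm:concordance}. Specifically, I would consider the directed strongly invertible knot
\[
J \;=\; 7_4 b^+ \,\#\, -(7_4 b^-) \;\in\; \mathcal{C}^{\text{inv}},
\]
built from the two directions of the underlying strongly invertible knot $7_4$ treated in Example \ref{exa:7_4}.

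First I would compute $\widetilde{\sigma}(J)$. Since $\widetilde{\sigma}:\mathcal{C}^{\text{inv}}\to\Z$ is a group homomorphism and we have already established $\widetilde{\sigma}(7_4 b^+) = -6$ and $\widetilde{\sigma}(7_4 b^-) = -10$, additivity gives
\[
\widetilde{\sigma}(J) \;=\; \widetilde{\sigma}(7_4 b^+) - \widetilde{\sigma}(7_4 b^-) \;=\; -6 - (-10) \;=\; 4.
\]
In particular $\widetilde{\sigma}(J)\neq 0$, so $J$ is non-trivial in $\mathcal{C}^{\text{inv}}$; equivalently, $J$ is not equivariantly slice.

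Next I would show that $\eta(J)=0$. The key observation is that Sakuma's $\eta$-polynomial is constructed from the quotient orbifold knot $(S^3/\rho, K/\rho)$, an invariant of the underlying symmetric pair $(K,\rho)$ alone; switching the basepoint/orientation that specifies the direction on the half-axis does not change this quotient data. Hence $\eta(7_4 b^+) = \eta(7_4 b^-)$, and by the homomorphism property $\eta:\mathcal{C}^{\text{inv}} \to \Z[t,t^{-1}]$ we get
\[
\eta(J) \;=\; \eta(7_4 b^+) - \eta(7_4 b^-) \;=\; 0.
\]

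The main obstacle is justifying the direction-independence of $\eta$ cleanly; the cleanest route is to recall Sakuma's definition of $\eta$ via the Alexander-type polynomial of the quotient, note that the quotient $(S^3/\rho, K/\rho)$ is unchanged when we switch direction, and conclude equality of the two $\eta$-values. Combining this with the non-vanishing of $\widetilde{\sigma}(J)$ exhibits $J$ as the required example: a class in $\ker(\eta)$ that is not equivariantly slice.
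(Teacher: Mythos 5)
Your argument is essentially the paper's own proof: the paper likewise takes the connected sum of $7_4b^+$ with the concordance inverse $m7_4b^-$, computes $\widetilde{\sigma} = -6-(-10)=4\neq 0$ via Example \ref{exa:7_4} and the homomorphism property of Theorem \ref{thm:concordance}, and concludes the resulting knot is not equivariantly slice while $\eta$ vanishes. The only difference is that you spell out a justification for $\eta=0$ (insensitivity of Sakuma's $\eta$ to the choice of direction), which the paper simply asserts; this is the intended reasoning, but since $\eta$ is defined for directed (framed) knots you should verify the direction-independence directly against Sakuma's construction rather than only from the quotient heuristic.
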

\begin{proof}
The directed strongly invertible knot $K = 7_4b^+ \# m7_4b^-$ has $\widetilde{\sigma}(K) = -6 - (-10) = 4$, so that $K$ is not equivariantly slice by Theorem \ref{thm:concordance} even though the Sakuma polynomial $\eta(K) = 0$.
\end{proof}

\begin{prop}\label{direction}
The invariant $\widetilde{\sigma}(K)$ can often distinguish between the two directions on a strongly invertible knot $K$. 
\end{prop}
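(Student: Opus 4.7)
The plan is to exhibit a single strongly invertible knot $K$ for which the two choices of direction produce distinct values of $\widetilde{\sigma}$. Since Theorem \ref{thm:concordance} makes $\widetilde{\sigma}$ a well-defined homomorphism $\mathcal{C}^{\text{inv}} \to \Z$, any such discrepancy immediately shows that the two directed knots are not equivariantly concordant, which is precisely what the proposition asserts.

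The candidate will be $7_4$ equipped with the strong inversion of Figure \ref{fig:7_4Badmissible}, in its two directed incarnations $7_4b^+$ and $7_4b^-$. First I will invoke the computations of Example \ref{exa:7_4}: Theorem \ref{thm:alternating} applied to the alternating admissible diagram on the left yields $\widetilde{\sigma}(7_4b^+) = -6$, while for the opposite direction I will use the restriction of the intersection form on $\Sigma(B^4,F)$ to the $\pm 1$-eigenspaces (giving $\sigma(E_+) = 2$ and $\sigma(E_-) = 4$, hence $\sigma(\Sigma(B^4,F),\widetilde{\rho}) = -2$), together with Lemma \ref{lem:relative_euler_linking_formula} to compute $e(\Delta,\widetilde{\gamma}) = 8$. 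Combining, $\widetilde{\sigma}(7_4b^-) = -2 - 8 = -10$.

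To conclude, I will compare the two values. Since $-6 \neq -10$ and $\widetilde{\sigma}$ factors through $\mathcal{C}^{\text{inv}}$ by Theorem \ref{thm:concordance}, the directed knots $7_4b^+$ and $7_4b^-$ cannot represent the same equivariant concordance class. As the two diagrams in Figure \ref{fig:7_4Badmissible} differ only in the choice of pointed half-axis for the same underlying strong inversion on $7_4$, this realizes $K = 7_4$ as a strongly invertible knot whose two directions determine distinct elements of $\mathcal{C}^{\text{inv}}$, which suffices to prove the proposition.

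There is essentially no obstacle beyond running the example: the entire content is already assembled in Example \ref{exa:7_4}, and the role of the proposition is simply to extract the direction-dependence consequence. The only point worth double-checking is the verification that the two diagrams really correspond to the same strong inversion with opposite directions rather than to inequivalent strong inversions, but this is transparent from the figure, since the two diagrams are identical as unoriented, unframed invariant projections and differ only in which half-axis carries the basepoint.
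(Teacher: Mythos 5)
Your argument is correct and is essentially the paper's own: the paper derives this proposition directly from Example \ref{exa:7_4}, where the computations $\widetilde{\sigma}(7_4b^+) = -6$ and $\widetilde{\sigma}(7_4b^-) = -10$, combined with the concordance invariance from Theorem \ref{thm:concordance}, show the two directions give distinct classes. No issues.
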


\begin{exa}
The g-signature is also readily computed for many infinite families. For example, consider the alternating torus knot $T(2,2n+1)$ for which an admissible diagram is shown in Figure \ref{fig:alt_torus_knots}. By Theorem \ref{thm:alternating}, we then have that $\widetilde{\sigma}(K) = -2n$, with a contribution of $-1$ from each off-axis crossing. 

\begin{figure}
\begin{overpic}[width=250pt, grid = false]{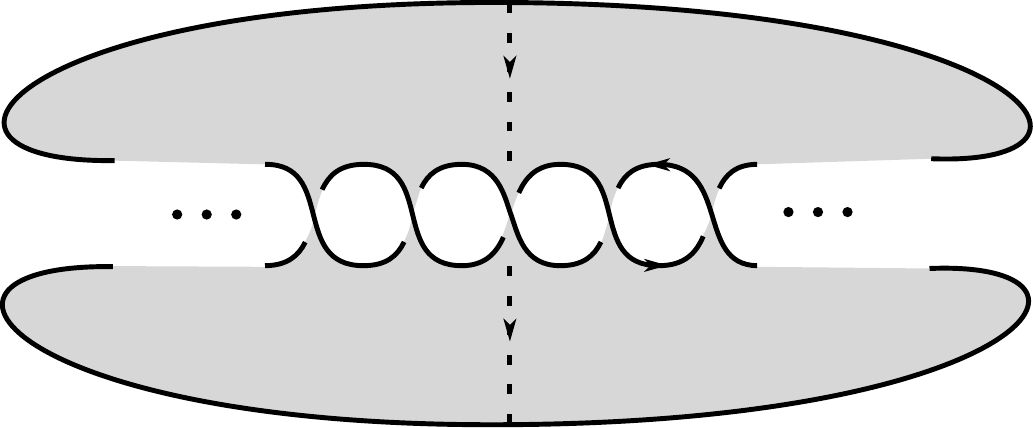}
\put (52.5, 19.5) {$a_1$}
\put (62, 19.5) {$a_2$}
\put (43, 19.5) {$\overline{a}_1$}
\put (33.5, 19.5) {$\overline{a}_2$}
\end{overpic}
\caption{An admissible diagram for the torus knot $T(2,2n+1)$.}
\label{fig:alt_torus_knots}
\end{figure}
\end{exa}

\subsection{Butterfly genus}
In \cite{BoyleIssa} the authors introduced the notion of a butterfly surface, and the corresponding notion of knot genus (butterfly genus). 

\begin{defi} Let $(K, \rho)$ be a strongly invertible knot. A surface $F\subset D^4$ with $\partial F= K \subset S^3$ is called a butterfly surface if there is a smooth extension $\rho:D^4 \to D^4$ of the involution with $\text{Fix}(\rho)$ intersecting the surface in a separating arc.
\end{defi}

A strongly invertible knot may not bound a butterfly surface; see \cite[Theorem 6]{BoyleIssa}. 

\begin{thm}\label{thm:butterfly}
Let $F\subset D^4$ be a butterfly surface with boundary a strongly invertible knot $(K,\rho) \subset S^3$. Then 
\[\widetilde{\sigma}(K) = \sigma(\Sigma(D^4,F), \widetilde{\rho}) \ , \]
that is the relative Euler number vanishes.  
\end{thm}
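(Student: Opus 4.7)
The plan is to use the butterfly hypothesis to construct an explicit parallel disk $\Delta' \subset W$ disjoint from $\Delta$ with $\partial \Delta' = \widetilde{\gamma}$; the relative Euler number $e(\Delta, \widetilde{\gamma}) = \#|\Delta \cap \Delta'|$ then vanishes.

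First I would identify $\Delta$ explicitly. The butterfly hypothesis implies that the fixed disk $\widetilde{D} \subset D^4$ of $\rho$ meets $F$ transversely along the separating arc $\beta$, partitioning $\widetilde{D}$ into two half-disks $\widetilde{D}_1, \widetilde{D}_2$ bounded respectively by $\beta \cup h$ and $\beta \cup h'$. In local coordinates $(u, v) \in \R^2 \times \C$ in which $F = \{v = 0\}$, $\widetilde{D} = \{u_2 = 0,\ \mathrm{Im}(v) = 0\}$, and the branched cover is $(u, z) \mapsto (u, z^2)$, the extension $\rho(u_1, u_2, v) = (u_1, -u_2, \overline{v})$ admits the two lifts $(u, z) \mapsto (u_1, -u_2, \pm \overline{z})$. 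Hence $\pi^{-1}(\widetilde{D})$ decomposes as the union of two smoothly embedded disks, one with $z \in \R$ projecting onto $\widetilde{D}_1$ and one with $z \in i\R$ projecting onto $\widetilde{D}_2$, meeting transversely along $\beta$. The boundary condition $\partial \Delta = \widetilde{h}'$ singles out the sheet covering $\widetilde{D}_2$; this is $\Delta$, a smoothly embedded disk.

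Next I would construct $\Delta'$. Pick a small parallel push-off $\widetilde{D}' \subset D^4$ of $\widetilde{D}$, disjoint from $\widetilde{D}$, along a direction of the normal bundle of $\widetilde{D}$ that is everywhere tangent to $F$ and points into the $F_2$-side of $F$ along $\beta$; this is globally consistent precisely because $F_2$ is connected, which is a consequence of the butterfly hypothesis. Then $\widetilde{D}' \cap F = \beta'$ is a parallel arc to $\beta$ lying in $F_2$, and $\partial \widetilde{D}'$ is a parallel circle $h^\dagger \cup (h')^\dagger \subset S^3$ to $A$. Letting $\Delta'$ be the sheet of $\pi^{-1}(\widetilde{D}')$ analogous to $\Delta$ (the one projecting onto the half of $\widetilde{D}'$ bounded by $\beta' \cup (h')^\dagger$), we obtain a smoothly embedded disk in $W$ disjoint from $\Delta$ (since $\pi^{-1}(\widetilde{D})$ and $\pi^{-1}(\widetilde{D}')$ are disjoint), with $\partial \Delta'$ equal to the lift of $(h')^\dagger$ to $\Sigma^2(S^3, K)$.

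Finally I would identify $\partial \Delta'$ with $\widetilde{\gamma}$. The arcs $(h')^\dagger$ and $\rho((h')^\dagger)$ are parallel arcs to $h'$ with endpoints on $K$, and they form the two sides of a band along $h'$ in $S^3$ whose trace in $D^4$ is the boundary collar of the push-off of $\widetilde{D}_2$ near $h'$. I claim that band surgery on $K$ along this band produces a two-component link in $S^3$ with linking number zero. To see this, resolve the transverse intersection of $F$ and $\widetilde{D}$ along $\beta$ in the direction matching the chosen push-off: the butterfly hypothesis ensures that this resolution yields two disjoint, smoothly embedded surfaces in $D^4$ whose boundary components are precisely the two components of the surgered link, which forces the linking number to vanish. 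By Section \ref{longitude} this is the defining property of $\gamma$, hence $(h')^\dagger$ agrees with $\gamma$ (up to the choice between the two parallel arcs) and $\partial \Delta' = \widetilde{\gamma}$, giving $e(\Delta, \widetilde{\gamma}) = 0$. The main obstacle is this last step: carefully constructing the resolution, exhibiting the two disjoint bounding surfaces, and verifying that their boundary link is the one arising from the butterfly push-off.
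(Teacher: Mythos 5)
Your proposal is correct and follows essentially the same route as the paper: you exhibit a disjoint parallel copy of $\Delta$ (equivalently a nowhere-vanishing section of its normal bundle) and identify its boundary with the canonical longitude $\widetilde{\gamma}$ by showing that the associated band surgery along $h'$ produces a two-component link of linking number zero, using that the separating arc $\beta$ yields two disjoint bounding surfaces in $D^4$. The paper packages the same push-off as a finger move of $F$ along the half-disk bounded by $\beta \cup h'$, whose parallel copies lift to the required normal section of $\Delta$, so the two arguments differ only in presentation.
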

\begin{proof}Let $D\subset D^4$ denote the fixed point set of $\rho$. Then $D \cap F$ is an arc $\beta$. Denote by $D'$ the component of $D \setminus \beta$ containing in its boundary the portion $h'$ of the axis that does not contain the basepoint. Let $F'$ be the result of performing a finger move to $F$ along $D'$; see Figure \ref{fig:butterfly_proof}. Then the portion of $F'$ emerging on the boundary $S^3$ is a band $B$. Note that since $\beta$ disconnects $F$, band surgery on $K$ along $B$ produces a two component link with linking number zero.

The difference $F' \setminus (F \cup B)$ consists of two disks $D_+$ and $D_-$ parallel to $D'$. The preimage of $D_+$ in the branched double cover $\Sigma(D^4,F)$ is the graph of a never-vanishing section $s$ of the normal bundle of $\Delta=\text{Fix}(\widetilde{\rho})$. Since  $s(\partial \Delta)=\widetilde{\gamma}$ is the canonical longitude then the relative Euler number $e(\nu\Delta, \widetilde{\gamma})$ vanishes.
\end{proof}

\begin{figure}
\begin{overpic}[width=200pt, grid = false]{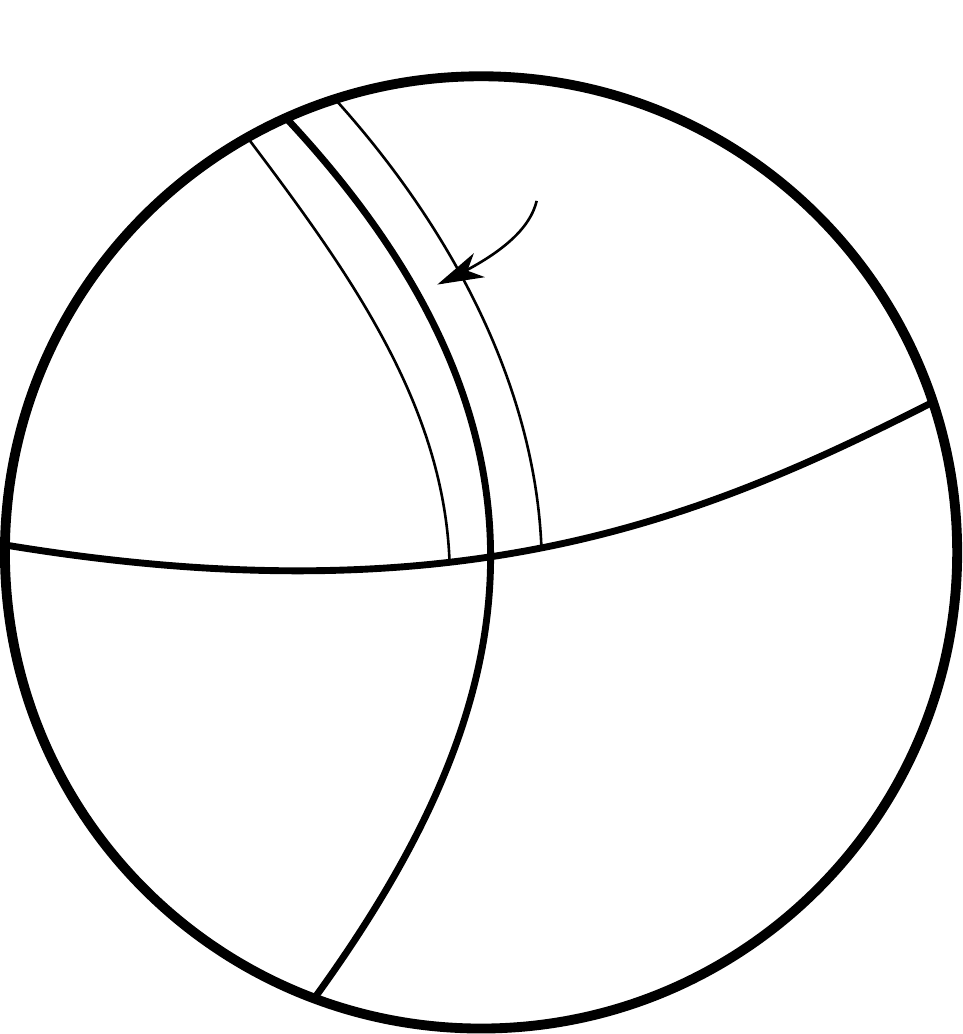}
\put (80, 6) {$D^4$}
\put (30, 58) {$D_+$}
\put (52, 62) {$D_-$}
\put (50, 41) {$\beta$}
\put (24, 92) {$B$}
\put (44, 20) {$D$}
\put (10, 40) {$F$}
\put (28, 7) {$h$}
\put (51,82) {$D'$}
\end{overpic}
\caption{A schematic for the surface $F'$ obtained by a finger move on $F$ along $D'$ in the proof of Theorem \ref{thm:butterfly}.}
\label{fig:butterfly_proof}
\end{figure}
\begin{remk}
Note that our $g$-signature agrees with the one defined in \cite{BoyleIssa} when the knot bounds a butterfly surface (and was not defined in \cite{BoyleIssa} otherwise). 
\end{remk}
\begin{cor}
If $(K,\rho)$ is a strongly invertible knot  then
\[\frac{1}{2}\left|\widetilde{\sigma}(K)\right|\leq \text{bg}_4(K, \rho) \ .\]
In particular the equivariant signature of an equivariantly slice strongly invertible knot vanishes.
\end{cor}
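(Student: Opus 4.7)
The plan is to combine Theorem \ref{thm:butterfly} with the general bound $|\sigma(W,g)|\leq b_2(W)$. Let $F \subset D^4$ be an orientable butterfly surface realizing $g(F)=\text{bg}_4(K,\rho)$, put $W=\Sigma(D^4,F)$, and let $\widetilde{\rho}$ be the distinguished lift. By Theorem \ref{thm:butterfly}, the correction term $e(\Delta,\widetilde{\gamma})$ vanishes for $F$, so
\[
\widetilde{\sigma}(K) \;=\; \sigma(W,\widetilde{\rho}),
\]
and the problem reduces to bounding the ordinary $g$-signature of $W$ in terms of the genus of $F$.

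Next, I would establish the universal inequality $|\sigma(W,\widetilde{\rho})|\leq b_2(W)$. Following Section 4.2, decompose $H_2(W;\R)=V_+\oplus V_-\oplus V_0$, with $V_0$ the radical of the intersection form. Because $\widetilde{\rho}_*$ preserves the intersection form, it also preserves this orthogonal splitting, and acts with eigenvalues $\pm 1$ on each factor. Writing $\sigma(W,\widetilde{\rho})=\mathrm{tr}(\widetilde{\rho}_*|_{V_+})-\mathrm{tr}(\widetilde{\rho}_*|_{V_-})$ and bounding each trace by the dimension of its subspace gives
\[
\bigl|\sigma(W,\widetilde{\rho})\bigr|\;\leq\;\dim V_+ + \dim V_-\;\leq\;b_2(W).
\]

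The final ingredient is the identification $b_2(W)=2g(F)$. The multiplicativity of Euler characteristics for branched double covers gives $\chi(W)=2\chi(D^4)-\chi(F)=1+2g(F)$; combined with the standard vanishing $H_1(W;\Q)=H_3(W;\Q)=0$ for double branched covers of $D^4$ along connected orientable surfaces (via a Mayer--Vietoris argument decomposing $W$ as two copies of $D^4\setminus\nu(F)$ glued along $F\times[-1,1]$), this yields $b_2(W)=2g(F)$. Stringing the three steps together produces $|\widetilde{\sigma}(K)|\leq 2\,\text{bg}_4(K,\rho)$, as claimed.

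For the ``in particular'' statement, note that an equivariant slice disk is automatically a butterfly surface: by Smith theory its intersection with $\mathrm{Fix}(\rho)$ is a properly embedded arc, and every arc in a disk with endpoints on the boundary is separating. Thus equivariant sliceness forces $\text{bg}_4(K,\rho)=0$, and the inequality above forces $\widetilde{\sigma}(K)=0$. The main delicate point is the Betti number computation for $W$; everything else follows formally from Theorem \ref{thm:butterfly} and the standard linear algebra of the $G$-signature decomposition.
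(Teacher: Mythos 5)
Your overall strategy is the same as the paper's: take a minimal genus butterfly surface, use Theorem \ref{thm:butterfly} to kill the correction term, bound the $G$-signature by the rank of $H_2$ of the branched cover, and identify that rank with $b_1(F)=2g(F)$. The paper simply asserts the inequality $-b_1(F)\leq \sigma(\Sigma(D^4,F),\widetilde{\rho})\leq b_1(F)$, so the only substantive content you add is the Betti number computation, and that is exactly where your argument has a gap. The Mayer--Vietoris decomposition you invoke, writing $W=\Sigma(D^4,F)$ as two copies of $D^4\setminus\nu(F)$ glued along $F\times[-1,1]$, is valid only when $F$ is a pushed-in Seifert surface (it is the decomposition used in the proof of Lemma \ref{lem:relative_euler_linking_formula}). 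A butterfly surface is an arbitrary properly embedded equivariant surface in $D^4$, possibly of smaller genus than any invariant Seifert surface, and need not be of pushed-in form; moreover, for any properly embedded connected $F$ the double cover of the exterior is connected, because the meridian generates $H_1(D^4\setminus\nu F;\Z)\cong\Z$ and so is nontrivial mod $2$, so $W$ is never a union of two copies of the exterior. Hence your justification of $H_1(W;\Q)=H_3(W;\Q)=0$ does not apply to the surfaces this corollary is about.

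The vanishing you need is nevertheless true for every properly embedded connected surface with boundary a knot, but it requires a different argument (this is essentially due to Kauffman and Taylor). For instance: since $H_1(D^4\setminus\nu F;\Z)\cong\Z$ is generated by the meridian, the Milnor--Wang sequence for the infinite cyclic cover shows that the Alexander module $M$ of the exterior satisfies $M/(t-1)M=0$ over $\Z$; then $M/(t+1)M$ is $2$-divisible, hence rationally trivial, so the double cover of the exterior has $b_1=1$, carried by the lifted meridian, which dies when the branch locus is reglued. This gives $b_1(W;\Q)=0$, and since $\partial W$ is a rational homology sphere also $b_3(W;\Q)=0$, whence $b_2(W)=2g(F)$ from the Euler characteristic count. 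With this repaired, the rest of your argument is fine and matches the paper: the trace bound $|\sigma(W,\widetilde{\rho})|\leq\dim V_++\dim V_-\leq b_2(W)$ is correct, and your observation that an equivariant slice disk is automatically a butterfly surface (its fixed set is a properly embedded arc, which separates a disk) supplies the ``in particular'' statement, a point the paper leaves implicit. Note also that if $K$ bounds no butterfly surface the inequality is vacuous, so no existence assumption is needed.
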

\begin{proof}If $F$ is a butterfly surface then $\widetilde{\sigma}(K)=  \sigma(\Sigma(D^4,F), \widetilde{\rho})$ and 
\[- b_1(F) \leq  \sigma(\Sigma(D^4,F), \widetilde{\rho}) \leq b_1(F)  \ .\]
Thus $\left|\widetilde{\sigma}(K)\right|\leq b_1(F) = 2 g(F) \leq 2 \cdot \text{bg}_4(K, \rho)$.
\end{proof}

\subsection{Final remarks} We also considered the possibility of defining invariants from higher order branched coverings. In the case of odd coverings these invariants seems to vanish. The invariants from even coverings on the other hand only depend on $\widetilde{\sigma}$ and the Tristram-Levine signatures.

\begin{CJK}{UTF8}{min}
\bibliography{Bibliography}
\bibliographystyle{alpha}
\end{CJK}
\end{document}